\documentclass[pdflatex,sn-mathphys-num]{sn-jnl}
\usepackage{mathrsfs}
\usepackage{enumitem}
\usepackage{graphicx}%
\usepackage{multirow}%
\usepackage[all]{xy}
\usepackage{amsmath,amssymb,amsfonts}%
\usepackage{amsthm}%
\usepackage{mathrsfs}%
\usepackage[title]{appendix}%
\usepackage{xcolor}%
\usepackage{textcomp}%
\usepackage{manyfoot}%
\usepackage{booktabs}%
\usepackage{algorithm}%
\usepackage{algorithmicx}%
\usepackage{algpseudocode}%
\usepackage{listings}%
\usepackage{soul}
\usepackage{enumitem}
\usepackage{tikz-cd}
\theoremstyle{thmstyleone}%
\newtheorem{theorem}{Theorem}[section]%  meant for continuous numbers
\numberwithin{equation}{section}
\newtheorem{proposition}[theorem]{Proposition}% 

\theoremstyle{thmstyletwo}%
\newtheorem{remark}[theorem]{Remark}%

\theoremstyle{thmstylethree}%
\newtheorem{definition}[theorem]{Definition}%
\newtheorem{lemma}[theorem]{Lemma}
\newtheorem{corollary}[theorem]{Corollary}
\begin{document}

    \title[A Variational Principle for the Topological Pressure of Non-autonomous Iterated Function Systems on Subsets]{A Variational Principle for the Topological Pressure of Non-autonomous Iterated Function Systems on Subsets}

%%=============================================================%%
%% GivenName	-> \fnm{Joergen W.}
%% Particle	-> \spfx{van der} -> surname prefix
%% FamilyName	-> \sur{Ploeg}
%% Suffix	-> \sfx{IV}
%% \author*[1,2]{\fnm{Joergen W.} \spfx{van der} \sur{Ploeg} 
%%  \sfx{IV}}\email{iauthor@gmail.com}
%%=============================================================%%

\author*[1]{\fnm{Yujun} \sur{Ju}} \email{yjju@ctbu.edu.cn}

\equalcont{These authors contributed equally to this work.}

\author[1]{\fnm{Lingbing} \sur{Yang}} \email{yanglingbing@ctbu.edu.cn}

\affil[1]{\orgdiv{School of Mathematics and Statistics}, \orgname{Chongqing Technology and Business University}, \orgaddress{\city{Chongqing}, \postcode{400067}, \country{People's Republic of China}}}
\equalcont{These authors contributed equally to this work.}

%%==================================%%
%% Sample for unstructured abstract %%
%%==================================%%

\abstract{
Motivated by the notion of topological entropy for free semigroup actions introduced by Bi\'s,
we define the Pesin--Pitskel topological pressure for non-autonomous iterated function systems
via the Carath\'eodory--Pesin structure.
We show that this Pesin--Pitskel topological pressure coincides with the corresponding weighted
topological pressure.
Furthermore, we establish a variational principle asserting that, for any nonempty compact subset,
the Pesin--Pitskel topological pressure equals the supremum of the associated measure-theoretic
pressures over all Borel probability measures supported on that subset.
}

\keywords{Non-autonomous iterated function systems, Topological pressure, 
Measure-theoretic pressure, Variational principle}

\pacs[MSC Classification]{37B40, 37B55, 	37D35}

\maketitle

\section{Introduction}
Entropy and pressure are fundamental quantitative tools for describing
dynamical complexity in dynamical systems.
Measure-theoretic entropy quantifies the asymptotic rate of information
production along orbits with respect to an invariant measure, whereas topological entropy reflects the global exponential
growth rate of distinguishable orbit segments.
Topological pressure extends topological entropy by incorporating potential functions and
plays a central role in thermodynamic formalism, multifractal analysis and
dimension theory.

In 1958, Kolmogorov \cite{Kolmogorov1958} associated to any measure-theoretic
dynamical system an isomorphism invariant, namely the measure-theoretic entropy
$h_{\mu}(T)$.
Later, as an invariant of topological conjugacy, Adler, Konheim and
McAndrew \cite{Adler1965} introduced the classical topological entropy
$h_{\mathrm{top}}(T)$, which may be viewed as a dynamical analogue of the upper
box dimension.
For a topological dynamical system $(X,T)$ on a compact metric space, these two
entropies are related by the variational principle
\[
h_{\mathrm{top}}(T)
=
\sup\left\{h_{\mu}(T): \mu \in M(X,T)\right\},
\]
where $M(X,T)$ denotes the set of all $T$-invariant Borel probability measures.
This variational principle was proved by Goodwyn, Dinaburg and Goodman
\cite{Goodwyn1969, Dinaburg1970, Goodman1971}.

Motivated by the analogy with Hausdorff dimension, Bowen \cite{Bowen1973}
introduced the topological entropy $h^{B}_{\mathrm{top}}(T,Z)$ for arbitrary
subsets $Z\subset X$, which is now commonly referred to as the Bowen
topological entropy. In particular, when $Z=X$, the Bowen topological entropy coincides with the
classical topological entropy, that
is,
\[
h^{B}_{\mathrm{top}}(T,X)=h_{\mathrm{top}}(T).
\]
Subsequently, Pesin and Pitskel \cite{PesinPitskel1984} generalized Bowen’s
approach and defined topological pressure for arbitrary subsets of $X$. Pesin \cite{Pesin1997} further developed a structure extending the classical
Carath\'eodory construction, now known as the Carath\'eodory--Pesin structure,
which leads to the notions of upper and lower capacity
topological pressures on arbitrary subsets, paralleling
the upper and lower box dimensions.
Feng and Huang \cite{FengHuang2012} introduced the packing topological entropy
analogous to the packing dimension and established variational principles for
the Bowen and packing topological entropies on nonempty compact sets.
Building on the work of Feng and Huang, Tang et al.\ \cite{Tang2015} and
Zhong and Cheng \cite{zhong2023variational} respectively established variational
principles for the Pesin--Pitskel topological pressure and the packing
topological pressure on nonempty compact subsets.

With the development of dynamical systems theory, increasing attention has been
devoted to systems generated by multiple transformations.
Kolyada and Snoha
\cite{kolyada1996topological} introduced the classical topological entropy and topological sup-entropy for non-autonomous dynamical systems.
Independently, Ghys, Langevin and Walczak \cite{Ghys1988} defined topological
entropy for finitely generated pseudogroups of continuous maps. The notion of topological entropy for free semigroup actions was subsequently
introduced by Bi\'s \cite{Bis2004} and Bufetov \cite{Bufetov1999}, respectively  referred to as
the maximal and averaged (topological) entropies in \cite{Huang2018topological}.
In what follows, we call them the Bi\'s and Bufetov topological entropies.
Variational principles for the Pesin--Pitskel topological pressure have been
established for non-autonomous dynamical systems
\cite{nazarian2024variational}, free semigroup actions
\cite{ZhongChen2021, xiao2022variational} and finitely generated pseudogroup
actions \cite{bis2025variational}.

Non-autonomous iterated function systems (NAIFSs for short) generalize free semigroup
actions and non-autonomous dynamical systems by allowing
the family of generating maps to vary with time.
This feature distinguishes NAIFSs from classical autonomous iterated function
systems \cite{rempe2016non}.
Consequently, many results established for free semigroup actions and
non-autonomous dynamical systems admit natural extensions to NAIFSs.
Ghane and Sarkooh \cite{ghane2019topological} introduced the notion of
topological entropy for NAIFSs via open covers, separated sets and spanning
sets.
They also introduced topological pressure for NAIFSs as an extension of the
corresponding notions for non-autonomous dynamical systems
\cite{huang2008topological} and free semigroup actions \cite{lin2018measure}.

Extending the notion of topological sup-entropy from non-autonomous dynamical
systems to equicontinuous NAIFSs, Ju and Yang \cite{ju2026factor} established a
semiconjugacy inequality for the factor map between equicontinuous NAIFSs for two
topological pressures.
They further observed that, when an equicontinuous NAIFS reduces to a free semigroup action,
the corresponding topological sup-entropy coincides with the Bi\'s topological entropy.
This identification indicates that the Bi\'s topological entropy can be viewed
as a topological sup-entropy version for free semigroup actions, whereas the Bufetov
topological entropy corresponds naturally to the classical topological entropy
of non-autonomous dynamical systems.
Accordingly, NAIFSs provide a natural bridge between non-autonomous dynamical
systems and free semigroup actions, rather than merely serving as a common generalization of these two systems.

Yang and Huang \cite{yang2025topological} introduced the topological pressure
and the capacity topological pressures for NAIFSs via the
Carath\'eodory--Pesin structure.
Together with the construction of topological pressure for NAIFSs via spanning
and separated sets in \cite{ghane2019topological}, these approaches suggest that the existing notions of topological pressure on
subsets of NAIFSs are primarily based on the averaged complexity captured by
the Bufetov topological entropy for free semigroup actions.

In contrast to the existing work based on the Bufetov (averaged) topological entropy,
to the best of our knowledge, there is currently no systematic study of
topological pressure for NAIFSs in the spirit of the Bi\'s (maximal)
topological entropy.
The aim of the present paper is to fill this gap by developing a notion of
topological pressure for NAIFSs associated with the Bi\'s topological entropy via the
Carath\'eodory--Pesin structure, and by establishing a corresponding variational principle.
Our results are inspired by the variational principles developed for
autonomous dynamical systems \cite{FengHuang2012,Tang2015},
non-autonomous dynamical systems \cite{nazarian2024variational}
and free semigroup actions \cite{ZhongChen2021,xiao2022variational}. Finally, for other variational principles concerning  topological pressures of NAIFSs, we refer the reader to \cite{cui2023variational,cui2026variational}.

The paper is organized as follows.
In Section 2, we recall basic definitions and preliminaries on non-autonomous iterated function systems and Bi\'s topological entropy for free semigroup actions.
In Section 3, we introduce the Pesin–Pitskel topological pressure and weighted topological pressure and prove their equivalence.
In Section 4, we show that the Pesin–Pitskel topological pressure can be determined by the measure-theoretic pressure of Borel probability measures, and we establish the variational principle between the Pesin–Pitskel topological pressure and the measure-theoretic pressure on nonempty compact subsets for NAIFSs.

\section{Preliminaries}

\subsection{Topological entropies of a semigroup of maps}

Let $(X,d)$ be a compact metric space, and let $G$ be a semigroup of continuous
self-maps of $X$ generated by a finite set
\(
G_1=\{f_{1},\ldots,f_{k}\}\cup\{\mathrm{id}_{X}\}.
\)
Write $G=\bigcup_{n\in\mathbb N} G_n$, where
\[
G_n=\{g_1\circ\cdots\circ g_n : g_1,\ldots,g_n\in G_1\}.
\]
Clearly, $G_m\subset G_n$ for all $m\le n$.
For each $n\in\mathbb N$, define a metric $d^{\max}_n$ on $X$ by
\[
d^{\max}_n(x,y)
=
\max\{d(g(x),g(y)) : g\in G_n\}.
\]
For $x\in X$, $n\in\mathbb N$ and $r>0$, the \((n,r)\)-Bowen ball with respect to $d^{\max}_n$ is defined by
\[
B^{\max}_n(x,r)=
\{y\in X : d^{\max}_n(x,y)<r\}.
\]
Let $\varepsilon>0$, $n\in\mathbb N$, and let $Y\subset X$.
A subset $E\subset Y$ is called $(n,\varepsilon)$\emph{-separated} if for any two distinct points $x,y\in E$,
\(
d^{\max}_n(x,y)>\varepsilon.
\)
A subset $F\subset X$ is said to $(n,\varepsilon)$\emph{-span} $Y$ if for each
$x\in Y$ there exists $y\in F$ such that
\(
d^{\max}_n(x,y)\le\varepsilon.
\)
Denote by $s(n,\varepsilon,Y)$ the maximal cardinality of an
$(n,\varepsilon)$-separated subset of $Y$, and by $r(n,\varepsilon,Y)$ the
minimal cardinality of a subset of $X$ which $(n,\varepsilon)$-spans $Y$.

Following Bi\'s \cite{Bis2004}, the \emph{Bi\'s topological entropy} of the free
semigroup action generated by $G_1$ is defined by
\[
h(G,G_1,X)
=
\lim_{\varepsilon\to0}\,
\limsup_{n\to\infty}
\frac{1}{n}\log s(n,\varepsilon,X)
=
\lim_{\varepsilon\to0}\,
\limsup_{n\to\infty}
\frac{1}{n}\log r(n,\varepsilon,X).
\]
More generally, for any subset $Y\subset X$, the Bi\'s topological entropy of
$Y$ can be defined by replacing $X$ with $Y$ in the above formula.

Ma and Wu \cite{ma2011topological} introduced a notion of topological entropy for arbitrary
subsets $Z\subset X$ with respect to $G_1$ by means of the
Carath\'eodory--Pesin structure, denoted by $h_Z(G_1)$.

Fix $r>0$.
For any $Z\subset X$, $\alpha\ge0$ and $N\in\mathbb N$, following the approach of
Pesin and Pitskel \cite{PesinPitskel1984}, define
\[
M(Z,\alpha,r,N)
=\inf\left\{
\sum_{i} \exp(-\alpha n_i)\right\},
\]
where the infimum is taken over all finite or countable collections \(\mathcal{F}=\{B^{\max}_{n_i}(x_i,r)\}_{i}\) such
that \(x_i\in X,~n_i \ge N \) and \(\mathcal{F}\) covers \(Z\), i.e., \(Z \subseteq \bigcup_{i}B^{\max}_{n_i}(x_i,r)\).
Define
\[
m(Z,\alpha,r)
=
\lim_{N\to\infty} M(Z,\alpha,r,N),
\]
and set
\[
h_Z(G_1,r)
=\inf\{\alpha : m(Z,\alpha,r)=0\}
=\sup\{\alpha : m(Z,\alpha,r)=+\infty\}.
\]
Then the topological entropy of $Z$ with respect to $G_1$ is defined by
\[
h_Z(G_1)=
\lim_{r\to0} h_Z(G_1,r).
\]
In particular, when $Z=X$, we have
\(
h_X(G_1)=h(G,G_1,X).
\)

\subsection{Non-autonomous iterated function systems}

Following~\cite{rempe2016non}, a \textit{non-autonomous iterated function system} is a pair \((X, \boldsymbol{f})\) in which \(X\) is a set and 
\(\boldsymbol{f}\) consists of a sequence \(\{\boldsymbol{f}^{(j)}\}_{j \geq 1}\) of collections of maps, where 
\[
\boldsymbol{f}^{(j)} = \{f_i^{(j)} : X \to X\}_{i \in I^{(j)}}
\]
and \(I^{(j)}\) is a non-empty finite index set for all \(j \geq 1\).  
If the set \(X\) is a compact topological space and all the maps \(f_i^{(j)}\) are continuous, 
we speak of a \textit{topological NAIFS}. 
Note that in the case where all \(f_i^{(j)}\) are contraction affine similarities, this is also referred to as a \textit{Moran set construction}. 
For simplicity, we define the following symbolic spaces for positive integers \(m, n \in \mathbb{N}\):
\[
I^{m,n} := \prod_{j=0}^{n-1} I^{(m+j)}, 
\qquad 
I^{m,\infty} := \prod_{j=m}^{\infty} I^{(j)}.
\]

A word \(w\) is called \textit{finite} if \(w \in I^{m,n}\) for some \(m,n \geq 1\); 
in this case its \textit{length} is \(n\), denoted by \(|w| := n\).  
Each word \(w \in I^{m,\infty}\) is called an \textit{infinite} word, and its length is infinite, denoted by \(|w| := \infty\).  

The time evolution of the system is defined by composing the maps \(f_i^{(j)}\) in the natural order. 
In general, for a finite (resp. infinite) word 
\[
w = w_m w_{m+1} \cdots w_{m+n-1} 
\quad (\text{resp. } w = w_m w_{m+1} \cdots)
\in I^{m,n}~(I^{m,\infty})
\]
and \(1 \leq k \leq |w|~(1 \leq k < \infty)\), we define
\[
f_w^{m,k}
:= f_{w_{m+k-1}}^{(m+k-1)} \circ \cdots \circ f_{w_{m+1}}^{(m+1)} \circ f_{w_m}^{(m)},
\qquad
f_w^{m,0} := \mathrm{id}_X.
\]

We also put \(f_w^{m,-k} := (f_w^{m,k})^{-1}\), which will be applied to sets, 
since we do not assume that the maps \(f_i^{(j)}\) are invertible.  

An NAIFS $(X,\boldsymbol{f})$ of continuous maps on a compact metric space
$(X,d)$ is said to be \emph{equicontinuous} if for every $\varepsilon>0$ there
exists $\delta>0$ such that
\[
d(x,y)<\delta
\;\Longrightarrow\;
d\bigl(f_i^{(j)}(x),\,f_i^{(j)}(y)\bigr)<\varepsilon
\]
for all $x,y\in X$, $j\ge1$, and $i\in I^{(j)}$.

Let $(X,\boldsymbol{f})$ be an equicontinuous NAIFS.
Following \cite{ju2026factor},
we define a metric
\[
d^{*}_{n}(x,y)=\sup_{i\in \mathbb{N}}\max_{w \in I^{i,n}}\max_{0\leq t \leq  n}d\left(f_{w}^{i,t}(x),f_{w}^{i,t}(y)\right).
\]
Since $\boldsymbol{f}$ is assumed to be composed of equicontinuous maps, $d_n^{*}$ is equivalent to $d$. Hence,  the metric space $(X, d_n^{*})$ is compact. A subset \(E^{*}\) of the space \(X\) is called \((n,\epsilon)^{*}\)-separated if for any two distinct points \(x,y\in E^{*}\), \(d^{*}_{n}(x,y)>\epsilon\). A set \(F^{*}\subset X\) \((n,\epsilon)^{*}\)-spans another set \(K\subset X\) provided that for each \(x\in K\) there is \(y\in F^{*}\) for which \(d^{*}_{n}(x,y)\leq\epsilon\).
We define \(s_n^{*}(\boldsymbol{f};Y;\epsilon)\) as the maximal cardinality of an \((n,\epsilon)^{*}\)-separated set in \(Y\) and \(r_n^{*}(\boldsymbol{f};Y;\epsilon)\) as the minimal cardinality of a set in \(Y\) which \((n,\epsilon)^{*}\)-spans \(Y\). Accordingly,
\[
H(\boldsymbol{f};Y)=\lim_{\varepsilon\to0}\limsup_{n\to\infty}\frac{1}{n}\log s_{n}^{*}(\boldsymbol{f};Y;\varepsilon)=\lim_{\varepsilon\to0}\limsup_{n\to\infty}\frac{1}{n}\log r_{n}^{*}(\boldsymbol{f};Y;\varepsilon).
\]
The quantity \(H(\boldsymbol{f};Y)\) is said to be the topological sup-entropy of \(\boldsymbol{f}\) on the set \(Y\).

If $\# I^{(j)}=1$ and $\boldsymbol{f}^{(j)}=\{f^{(j)}_{1}\}$
for every $j\ge1$, then \(H(\boldsymbol{f};Y)\) reduces to the topological sup-entropy \(H(f_{1,\infty};Y)\)
of an equicontinuous non-autonomous dynamical system $(X,f_{1,\infty})$, where
$f_{1,\infty}=\{f^{(j)}_{1}\}_{j=1}^{\infty}$.
Moreover, if $\boldsymbol{f}^{(i)}=\boldsymbol{f}^{(j)}$ for all $i,j\ge1$, let
$(X,G)$ be the free semigroup action generated by
\[
G_1=\{f^{(1)}_{i}: i\in I^{(1)}\}\cup\{\mathrm{id}_{X}\}.
\]
Then
\[
d^{*}_{n}(x,y)
=
\max_{w \in I^{1,n}}\max_{0\le t \le n}
d\bigl(f_{w}^{1,t}(x),f_{w}^{1,t}(y)\bigr)
=
d^{\max}_n(x,y),
\]
where $d^{\max}_n$ is the metric associated with the free semigroup action
$(X,G)$.
Consequently, $H(\boldsymbol{f};Y)$ coincides with the Bi\'s topological entropy
$h(G,G_1,Y)$.

\section{Topological pressures of NAIFSs}

\subsection{Pesin--Pitskel and lower and upper capacity topological pressures}\label{tp}
Let $(X,\boldsymbol{f})$ be a topological NAIFS on a compact metric space $(X,d)$.
Denote by $C(X,\mathbb{R})$ the Banach space of all continuous real-valued
functions on $X$ equipped with the supremum norm.
Let $\varphi \in C(X,\mathbb{R})$ be a continuous potential function for
$\boldsymbol{f}=\{\boldsymbol{f}^{(j)}\}_{j\ge1}$, where
$\boldsymbol{f}^{(j)}=\{f_i^{(j)}:X\to X\}_{i\in I^{(j)}}$.

For a finite word $w=w_1 w_2 \cdots w_n \in I^{1,n}$, define the Birkhoff sum of
$\varphi$ along $w$ by
\[
S_w \varphi(x)
:= \sum_{i=0}^{n-1}\varphi(f_{w}^{1,i}(x))=\varphi(x)
   + \varphi\big(f_{w}^{1,1}(x)\big)
   + \cdots
   + \varphi\big(f_{w}^{1,n-1}(x)\big).
\]

Considering a real number $\delta > 0$ and a point $x\in X$, we can define the $(n,\delta)$-Bowen ball at $x$ with respect to $d_n$ by
\[
B_{n}(x,\delta)=\{y \in X : d_{n}(x,y) < \delta \},
\]
where 
\[
d_{n}(x,y):=\max_{w \in I^{1,n}}\max_{0\leq t \leq  n}d\left(f_{w}^{1,t}(x),f_{w}^{1,t}(y)\right).
\]
For \(x \in X\), we write
\[
S_n\varphi(x)=\max_{w\in I^{1,n}} S_{w}\varphi(x).
\]
Also, for any \(\delta>0\), let
\[
S_n\varphi(x,\delta)=\sup_{y \in B_{n}(x,\delta)}S_{n}\varphi(y).
\]
Given $Z\subset X$, $\alpha \in \mathbb{R}$ and $N>0$, define
\begin{equation}\label{eq1}
M(Z,\boldsymbol{f},\varphi,\alpha,\delta,N)
=\inf_{\mathcal{G}}
\left\{
\sum_{B_{n_i}(x_i,\delta)\in\mathcal{G}}
\exp\!\left(
-\alpha n_i
+S_{n_i}\varphi(x_i)
\right)
\right\},
\end{equation}
where the infimum runs over all finite or countable subcollections
$\mathcal{G}=\{B_{n_i}(x_i,\delta)\}$ such that $x_i \in X$, $n_i \ge N$ and
\(
\bigcup_{i} B_{n_i}(x_i,\delta)\supset Z.
\)

We can easily verify that the function
$M(Z,\boldsymbol{f},\varphi,\alpha,\delta,N)$
is non-decreasing as $N$ increases.
Therefore, there exists the limit
\[
m(Z,\boldsymbol{f},\varphi,\alpha,\delta)
= \lim_{N\to\infty}
  M(Z,\boldsymbol{f},\varphi,\alpha,\delta,N).
\]

Moreover, we can also define
\begin{equation}\label{eq2}
R(Z,\boldsymbol{f},\varphi,\alpha,\delta,N)
=\inf_{\mathcal{G}_{N}}
\left\{
\sum_{B_{N}(x_i,\delta)\in\mathcal{G}_{N}}
\exp\!\left(
-\alpha N
+S_{N}\varphi(x_i)
\right)
\right\},
\end{equation}
where the infimum is taken over all finite or countable subcollections
$\mathcal{G}_{N}=\{B_{N}(x_i,\delta)\}$ covering $Z$, i.e.,
\(
\bigcup_{i} B_{N}(x_i,\delta)\supset Z.
\)

Let
\[
\underline{r}(Z,\boldsymbol{f},\varphi,\alpha,\delta)
= \liminf_{N\to\infty}
  R(Z,\boldsymbol{f},\varphi,\alpha,\delta,N),
\]
\[
\overline{r}(Z,\boldsymbol{f},\varphi,\alpha,\delta)
= \limsup_{N\to\infty}
  R(Z,\boldsymbol{f},\varphi,\alpha,\delta,N).
\]

By the construction of the Carath\'eodory--Pesin structure (for details, see \cite{Pesin1997}), there exist unique critical values such that the quantities 
$m(Z,\boldsymbol{f},\varphi,\alpha,\delta)$,
$\underline{r}(Z,\boldsymbol{f},\varphi,\alpha,\delta)$, and
$\overline{r}(Z,\boldsymbol{f},\varphi,\alpha,\delta)$
jump from $\infty$ to $0$.
We denote them respectively by
$P_{Z}(\boldsymbol{f},\varphi,\delta)$,
$\underline{CP}_{Z}(\boldsymbol{f},\varphi,\delta)$ and
$\overline{CP}_{Z}(\boldsymbol{f},\varphi,\delta)$.
Accordingly, we have
\[
P_{Z}(\boldsymbol{f},\varphi,\delta)
=\inf\{\alpha : m(Z,\boldsymbol{f},\varphi,\alpha,\delta)=0\}
=\sup\{\alpha : m(Z,\boldsymbol{f},\varphi,\alpha,\delta)=\infty\},
\]
\[
\underline{CP}_{Z}(\boldsymbol{f},\varphi,\delta)
=\inf\{\alpha : \underline{r}(Z,\boldsymbol{f},\varphi,\alpha,\delta)=0\}
=\sup\{\alpha : \underline{r}(Z,\boldsymbol{f},\varphi,\alpha,\delta)=\infty\},
\]
\[
\overline{CP}_{Z}(\boldsymbol{f},\varphi,\delta)
=\inf\{\alpha : \overline{r}(Z,\boldsymbol{f},\varphi,\alpha,\delta)=0\}
=\sup\{\alpha : \overline{r}(Z,\boldsymbol{f},\varphi,\alpha,\delta)=\infty\}.
\]

Since the functions 
$\delta \mapsto P_{Z}(\boldsymbol{f},\varphi,\delta)$, 
$\delta \mapsto \underline{CP}_{Z}(\boldsymbol{f},\varphi,\delta)$, 
and 
$\delta \mapsto \overline{CP}_{Z}(\boldsymbol{f},\varphi,\delta)$ 
are nonincreasing in $\delta$, their limits as $\delta \to 0$ exist.

\begin{definition}\label{4.1t}
For any set $Z\subset X$, we call the following quantities
\[
P_{Z}(\boldsymbol{f},\varphi)
:=\lim_{\delta\to0}P_{Z}(\boldsymbol{f},\varphi,\delta),
\]
\[
\underline{CP}_{Z}(\boldsymbol{f},\varphi)
:=\lim_{\delta\to0}\underline{CP}_{Z}(\boldsymbol{f},\varphi,\delta),
\]
\[
\overline{CP}_{Z}(\boldsymbol{f},\varphi)
:=\lim_{\delta\to0}\overline{CP}_{Z}(\boldsymbol{f},\varphi,\delta),
\]
the \emph{Pesin--Pitskel topological pressure} and the \emph{lower} and
\emph{upper capacity topological pressures} of $\varphi$ on the set $Z$
with respect to $\boldsymbol{f}$, respectively.
In particular, when $\varphi\equiv0$, we denote by
\(
h_{Z}(\boldsymbol{f}),
\underline{Ch}_{Z}(\boldsymbol{f}),
\overline{Ch}_{Z}(\boldsymbol{f})
\)
the \emph{Bowen topological entropy} and the \emph{lower} and
\emph{upper capacity topological entropies} of $\boldsymbol{f}$ on $Z$,
respectively.
\end{definition}

\begin{remark}
(1) It is easy to see that
\[
P_{Z}(\boldsymbol{f},\varphi)
\le
\underline{C P}_{Z}(\boldsymbol{f},\varphi)
\le
\overline{C P}_{Z}(\boldsymbol{f},\varphi).
\]

(2) If $\boldsymbol{f}^{(j)}=\{f_{0},\ldots,f_{k-1}\}$ for all $j\ge1$,
then $(X,\boldsymbol{f})$ reduces to a free semigroup action.
Moreover, when $\varphi\equiv0$, the quantity $h_{Z}(\boldsymbol{f})$
coincides with the topological entropy introduced by Ma and Wu~\cite{ma2011topological}.

(3) If $\#\boldsymbol{f}^{(j)}=1$ for all $j\ge1$, then $(X,\boldsymbol{f})$
reduces to a non-autonomous dynamical system.
In this case, the quantity $P_{Z}(\boldsymbol{f},\varphi)$
coincides with the Pesin--Pitskel topological pressure defined by
Sarkooh~\cite{nazarian2024variational}.
\end{remark}

If we replace \(S_{n_i}\varphi(x_i)\) and \(S_{N}\varphi(x_i)\) by \(S_{n_i}\varphi(x_i,\delta)\) and \(S_N\varphi(x_i,\delta)\) in  Eqs. (\ref{eq1}) and (\ref{eq2}) respectively, we can define new functions 
\(\mathcal{M}\), \(\mathcal{R}\) and \(\mathfrak{m}\),
respectively. Finally, for any $Z\subset X$, we denote the corresponding critical values by
\[
P'_{Z}(\boldsymbol{f},\varphi,\delta),\qquad
\underline{C P'}_{Z}(\boldsymbol{f},\varphi,\delta),\qquad
\overline{C P'}_{Z}(\boldsymbol{f},\varphi,\delta).
\]

\begin{theorem}\label{4.2t}
For any set $Z \subset X$, the following hold:
 \[P_{Z}(\boldsymbol{f},\varphi)=\lim\limits_{\delta\rightarrow 0}P'_{Z}(\boldsymbol{f},\varphi,\delta),\]
 \[\underline{CP}_{Z}(\boldsymbol{f},\varphi)=\lim\limits_{\delta\rightarrow 0}\underline{CP'}_{Z}(\boldsymbol{f},\varphi,\delta),\]
 \[\overline{CP}_{Z}(\boldsymbol{f},\varphi)=\lim\limits_{\delta\rightarrow 0}\overline{CP'}_{Z}(\boldsymbol{f},\varphi,\delta).\]
\end{theorem}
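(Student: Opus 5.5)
The plan is the standard two-sided comparison from Pesin's book, adapted to the maximal Birkhoff sums $S_n\varphi$. Since $S_n\varphi(x)\le S_n\varphi(x,\delta)$ for every $x$, each sum in the primed version dominates the corresponding unprimed sum over the same cover. Hence $M\le\mathcal M$ and $R\le\mathcal R$, so $P_Z(\boldsymbol f,\varphi,\delta)\le P'_Z(\boldsymbol f,\varphi,\delta)$, and likewise for the lower and upper capacity pressures. Taking $\delta\to0$ gives the inequality ``$\le$'' in all three identities. Note that $P'_Z(\boldsymbol f,\varphi,\delta)$ is also nonincreasing in $\delta$: a smaller radius forces smaller balls and a smaller supremum, but the admissible covers also shrink. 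So I would work with $\limsup_{\delta\to0}$ and show it is at most $P_Z(\boldsymbol f,\varphi)$, which simultaneously establishes that the limit exists.

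For the reverse inequality, introduce the variation
\[
\gamma(\varepsilon)=\sup\{|\varphi(x)-\varphi(y)|: d(x,y)<\varepsilon\},
\]
which tends to $0$ as $\varepsilon\to0$ by uniform continuity of $\varphi$ on compact $X$. The key estimate concerns $y\in B_n(x,\delta)$. For every word $w\in I^{1,n}$ and every $0\le t\le n-1$ we have $d(f_w^{1,t}x,f_w^{1,t}y)<\delta$, hence $|S_w\varphi(y)-S_w\varphi(x)|\le n\gamma(\delta)$. Taking the maximum over $w$ on both sides gives
\[
|S_n\varphi(y)-S_n\varphi(x)|\le n\gamma(\delta),
\]
because the maximum of functions each within $n\gamma$ of another family is within $n\gamma$ of that family's maximum. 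Therefore $S_n\varphi(x,\delta)\le S_n\varphi(x)+n\gamma(\delta)$.

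Substituting into the sums gives, for every cover $\mathcal G$ and each $\alpha$,
\[
\sum_i\exp\bigl(-\alpha n_i+S_{n_i}\varphi(x_i,\delta)\bigr)\le\sum_i\exp\bigl(-(\alpha-\gamma(\delta))n_i+S_{n_i}\varphi(x_i)\bigr).
\]
Hence
\[
\mathcal M(Z,\boldsymbol f,\varphi,\alpha,\delta,N)\le M(Z,\boldsymbol f,\varphi,\alpha-\gamma(\delta),\delta,N),
\]
and the same bound holds for $\mathcal R$ versus $R$ with $n_i=N$. Passing to $N\to\infty$ (limit, liminf, or limsup respectively) and reading off critical values yields
\[
P'_Z(\boldsymbol f,\varphi,\delta)\le P_Z(\boldsymbol f,\varphi,\delta)+\gamma(\delta),
\]
together with the analogous bounds for $\underline{CP'}$ and $\overline{CP'}$. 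Letting $\delta\to0$ and using $\gamma(\delta)\to0$ gives
\[
\limsup_{\delta\to0}P'_Z(\boldsymbol f,\varphi,\delta)\le P_Z(\boldsymbol f,\varphi),
\]
and combining with the first step gives the equality. The capacity cases are identical.

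The only genuinely delicate point is the Bowen-ball estimate. The metric $d_n$ is a maximum over all words, and $S_n\varphi$ is a maximum over words that need not be attained by the same word at $x$ and at $y$. The resolution is the uniform-in-$w$ bound above, which compares each $S_w$ separately before maximizing. No equicontinuity is needed here, because $d_n$ already controls every orbit point directly. One should also note that the definition of $d_n$ includes $t=n$, which is harmless since the Birkhoff sum only uses $t\le n-1$.
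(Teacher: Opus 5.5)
Your proposal is correct and follows the paper's proof essentially verbatim: the trivial bound $P_Z(\boldsymbol{f},\varphi,\delta)\le P'_Z(\boldsymbol{f},\varphi,\delta)$, the word-by-word estimate $S_n\varphi(x,\delta)\le S_n\varphi(x)+n\gamma(\delta)$, and the comparison $\mathcal{M}(Z,\boldsymbol{f},\varphi,\alpha,\delta,N)\le M(Z,\boldsymbol{f},\varphi,\alpha-\gamma(\delta),\delta,N)$ give the sandwich $P_Z(\boldsymbol{f},\varphi,\delta)\le P'_Z(\boldsymbol{f},\varphi,\delta)\le P_Z(\boldsymbol{f},\varphi,\delta)+\gamma(\delta)$, and the capacity cases go the same way. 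Your aside that $P'_Z(\boldsymbol{f},\varphi,\delta)$ is nonincreasing in $\delta$ is not justified by the competing effects you describe, but nothing depends on it, because the sandwich alone forces the limit to exist.
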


\begin{proof}
Our argument follows the approach of Pesin~\cite{Pesin1997}.
Fix $\delta>0$.
Since $S_n\varphi(x)\le S_n\varphi(x,\delta)$ for all $x\in X$ and $n\in\mathbb N$,
we clearly have
$P_Z(\boldsymbol f,\varphi,\delta)\le P'_Z(\boldsymbol f,\varphi,\delta)$.
Define
\[
\varepsilon(\delta)
:= \sup\Bigl\{
\bigl|\varphi(x)-\varphi(y)\bigr|
: d(x,y)<\delta \Bigr\}.
\]
Since $\varphi$ is continuous on the compact space $X$,
we have $\varepsilon(\delta)<\infty$ and
$\varepsilon(\delta)\to0$ as $\delta\to0$.
Moreover, for any $y\in B_{n}(x,\delta)$ and $w \in I^{1,n}$, we have
\[
\bigl|S_{w}\varphi(x)-S_{w}\varphi(y)\bigr|
\le n\,\varepsilon(\delta),
\]
which implies that for any \(x \in X\),
\[
S_{n}\varphi(x,\delta) \leq S_{n}\varphi(x)+n\,\varepsilon(\delta).
\]
Let $\mathcal{G}=\{B_{n_i}(x_i,\delta)\}_{i}$ be a cover of $Z$ with $x_i\in X$ and $n_i\ge N$.
Then
\begin{align*}
\mathcal{M}(Z,\boldsymbol{f},\varphi,\alpha,\delta,N)
&\le
\sum_{i}
\exp\!\left(
-\alpha n_i+S_{n_i}\varphi(x_i,\delta)
\right)\\[0.3em]
&\le
\sum_{i}
\exp\!\left(
-\alpha n_i+
S_{n_i}\varphi(x_i)+n_i\varepsilon(\delta)
\right)\\[0.3em]
&=
\sum_{i}
\exp\!\left(
-n_i(\alpha-\varepsilon(\delta))+
S_{n_i}\varphi(x_i)
\right).
\end{align*}
Thus,
\[
\mathcal{M}(Z,\boldsymbol{f},\varphi,\alpha,\delta,N)
\le
M(Z,\boldsymbol{f},\varphi,\alpha-\varepsilon(\delta),\delta,N).
\]
Letting $N\to\infty$ yields
\[
\mathfrak{m}(Z,\boldsymbol{f},\varphi,\alpha,\delta)
\le
m(Z,\boldsymbol{f},\varphi,\alpha-\varepsilon(\delta),\delta),
\]
and therefore
\[
P_{Z}(\boldsymbol{f},\varphi,\delta) \le P'_{Z}(\boldsymbol{f},\varphi,\delta)
\le
P_{Z}(\boldsymbol{f},\varphi,\delta)
+\varepsilon(\delta).
\]
Since $\varepsilon(\delta)\to0$ as $\delta\to0$, we obtain
\[
P_{Z}(\boldsymbol{f},\varphi)
=\lim_{\delta\to0}P'_{Z}(\boldsymbol{f},\varphi,\delta).
\]
The two capacity cases can be treated in the same way.
\end{proof}

The following basic properties of Pesin--Pitskel topological pressure and lower and upper capacity topological pressures of an NAIFS can be verified directly from the basic properties of the Carath\'{e}odory-Pesin dimension \cite{Pesin1997} and definitions.

\begin{proposition}\label{3.1p}
(1) $P_{\varnothing}(\boldsymbol{f},\varphi) \le 0.$

(2) $P_{Z_{1}}(\boldsymbol{f},\varphi)
      \le P_{Z_{2}}(\boldsymbol{f},\varphi)$
      if $Z_{1}\subset Z_{2}$.

(3) $P_{Z}(\boldsymbol{f},\varphi)
      =\sup\limits_{i\ge1} P_{Z_{i}}(\boldsymbol{f},\varphi)$,
      where $Z=\bigcup_{i\ge1}Z_{i}$ and each $Z_i\subset X$.
\end{proposition}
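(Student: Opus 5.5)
The plan is to prove all three items at the level of a fixed scale $\delta>0$, first for the set functions $m(\cdot,\boldsymbol f,\varphi,\alpha,\delta)$ and then for the critical values $P_{\cdot}(\boldsymbol f,\varphi,\delta)$. Only at the end will we let $\delta\to0$.

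\emph{Item (1).} For $Z=\varnothing$, the empty collection $\mathcal G$ is an admissible cover for every $N$. Hence $M(\varnothing,\boldsymbol f,\varphi,\alpha,\delta,N)=0$ for every $\alpha\in\mathbb R$ and every $N$, so $m(\varnothing,\boldsymbol f,\varphi,\alpha,\delta)=0$ for all $\alpha$. This gives $P_\varnothing(\boldsymbol f,\varphi,\delta)=-\infty\le 0$. If one prefers to require nonempty covers, take a single ball $B_N(x,\delta)$. Its contribution is at most $\exp(-N(\alpha-\|\varphi\|_\infty))$, which tends to $0$ as $N\to\infty$ whenever $\alpha>\|\varphi\|_\infty$, so $P_\varnothing(\boldsymbol f,\varphi,\delta)$ is in any case bounded above. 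The key observation is that $|S_n\varphi|\le n\|\varphi\|_\infty$, because each $S_w\varphi$ is a sum of $n$ values of $\varphi$. Letting $\delta\to0$ yields the claim in the convention $P_\varnothing=-\infty$ (or $\le0$), as in Pesin's axioms.

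\emph{Item (2).} If $Z_1\subset Z_2$, then every cover of $Z_2$ admissible in \eqref{eq1} also covers $Z_1$. Therefore $M(Z_1,\ldots,N)\le M(Z_2,\ldots,N)$, and in the limit $m(Z_1,\alpha,\delta)\le m(Z_2,\alpha,\delta)$. If $m(Z_2,\alpha,\delta)=0$, then $m(Z_1,\alpha,\delta)=0$, so taking infima over such $\alpha$ gives $P_{Z_1}(\boldsymbol f,\varphi,\delta)\le P_{Z_2}(\boldsymbol f,\varphi,\delta)$. Letting $\delta\to0$ finishes (2).

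\emph{Item (3).} The inequality $\ge$ follows from (2). For $\le$, I would first establish countable subadditivity at fixed $N$:
\[
M(Z,\alpha,\delta,N)\le\sum_{i\ge1}M(Z_i,\alpha,\delta,N).
\]
To see this, fix $\eta>0$ and choose for each $Z_i$ an admissible cover $\mathcal G_i$ whose sum is at most $M(Z_i,\alpha,\delta,N)+\eta2^{-i}$. The union $\bigcup_i\mathcal G_i$ is still countable, still has $n_j\ge N$, and covers $Z$, so $M(Z,\alpha,\delta,N)\le\sum_i M(Z_i,\alpha,\delta,N)+\eta$. Because $M(\cdot,N)$ is nondecreasing in $N$, we have $M(Z_i,\alpha,\delta,N)\le m(Z_i,\alpha,\delta)$. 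Letting $N\to\infty$ then gives $m(Z,\alpha,\delta)\le\sum_i m(Z_i,\alpha,\delta)$.

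Now take $\alpha>\sup_i P_{Z_i}(\boldsymbol f,\varphi,\delta)$. Then $\alpha$ exceeds each critical value, so $m(Z_i,\alpha,\delta)=0$ for every $i$, and hence $m(Z,\alpha,\delta)=0$. This shows $P_Z(\boldsymbol f,\varphi,\delta)\le\sup_i P_{Z_i}(\boldsymbol f,\varphi,\delta)$.

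The main obstacle is the passage $\delta\to0$, since it requires exchanging a limit in $\delta$ with a supremum over $i$. Monotonicity handles it. For each $i$, the quantity $P_{Z_i}(\boldsymbol f,\varphi,\delta)$ is nonincreasing in $\delta$, so $P_{Z_i}(\boldsymbol f,\varphi,\delta)\le P_{Z_i}(\boldsymbol f,\varphi)$. Consequently
\[
P_Z(\boldsymbol f,\varphi,\delta)\le\sup_i P_{Z_i}(\boldsymbol f,\varphi,\delta)\le\sup_i P_{Z_i}(\boldsymbol f,\varphi)
\]
for every $\delta$, and letting $\delta\to0$ gives $P_Z(\boldsymbol f,\varphi)\le\sup_iP_{Z_i}(\boldsymbol f,\varphi)$, which completes (3).
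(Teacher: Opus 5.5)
Your argument is correct and is essentially the route the paper intends. The paper gives no separate proof and only appeals to the general properties of the Carath\'eodory--Pesin structure in Pesin's book; your direct verification is exactly the standard argument behind those properties: the empty cover for (1), inclusion of admissible covers for (2), and, for (3), countable subadditivity of $M(\cdot,\boldsymbol{f},\varphi,\alpha,\delta,N)$ at fixed $N$ together with monotonicity in $\delta$ to pass $\sup_i$ through $\lim_{\delta\to0}$. That fixed-$N$ subadditivity is in fact the form the paper later invokes in Step~3 of the proof of Proposition~\ref{p:W=B}.

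The one thing to drop is the nonempty-cover aside in (1). With nonempty covers you only get $P_{\varnothing}(\boldsymbol{f},\varphi)\le\|\varphi\|_\infty$, and for $\varphi\equiv 1$ you get exactly $1$. So it is precisely the admissibility of the empty cover that gives the bound $\le 0$.
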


\begin{proposition}\label{3.2p}
(1)  $\underline{CP}_{\varnothing}(\boldsymbol{f},\varphi)\le0,$ 
~ $\overline{CP}_{\varnothing}(\boldsymbol{f},\varphi)\le0.$

(2) \(
\underline{CP}_{Z_{1}}(\boldsymbol{f},\varphi)
   \le \underline{CP}_{Z_{2}}(\boldsymbol{f},\varphi)\)
and
\(\overline{CP}_{Z_{1}}(\boldsymbol{f},\varphi)
   \le \overline{CP}_{Z_{2}}(\boldsymbol{f},\varphi)
\), if $Z_{1}\subset Z_{2}$.

(3) \(
\underline{CP}_{Z}(\boldsymbol{f},\varphi)
   \ge \sup_{i\ge1}\underline{CP}_{Z_{i}}(\boldsymbol{f},\varphi)\)
and
\(\overline{CP}_{Z}(\boldsymbol{f},\varphi)
   \ge \sup_{i\ge1}\overline{CP}_{Z_{i}}(\boldsymbol{f},\varphi)
\),
where $Z=\bigcup_{i\ge1}Z_{i}$ and each $Z_{i}\subset X$.

(4) If $g:X \rightarrow X$ is a homeomorphism which commutes with $\boldsymbol{f}$ (i.e., $f_i^{(j)}\circ g = g\circ f_i^{(j)}$ for all $i\in I^{(j)}$ and $j\ge1$), then
\[
P_{Z}(\boldsymbol{f},\varphi)
   = P_{g(Z)}(\boldsymbol{f},\varphi\circ g^{-1}),
\]
\[
\underline{CP}_{Z}(\boldsymbol{f},\varphi)
= \underline{CP}_{g(Z)}(\boldsymbol{f},\varphi\circ g^{-1}),\]
\[
\overline{CP}_{Z}(\boldsymbol{f},\varphi)
= \overline{CP}_{g(Z)}(\boldsymbol{f},\varphi\circ g^{-1}).
\]
\end{proposition}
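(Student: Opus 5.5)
Proposition \ref{3.2p} will follow the pattern of Proposition \ref{3.1p}: each claim is checked at a fixed scale $\delta>0$ and at the level of the set functions $R$, $\underline r$, $\overline r$ and $m$, and then $\delta\to0$. Each inequality is preserved by passing to critical values and then to the limit.

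For (1), take $Z=\varnothing$. The empty collection covers $\varnothing$, so $R(\varnothing,\boldsymbol f,\varphi,\alpha,\delta,N)=0$ for every $\alpha$ and $N$. Hence $\underline r=\overline r=0$ for every $\alpha$. The critical values are then $-\infty$, which is $\le 0$.

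For (2), let $Z_1\subset Z_2$. Every cover $\{B_N(x_i,\delta)\}$ of $Z_2$ also covers $Z_1$, so $R(Z_1,\cdot,N)\le R(Z_2,\cdot,N)$ for each $N$. Taking $\liminf$ or $\limsup$ in $N$ gives $\underline r(Z_1,\alpha,\delta)\le\underline r(Z_2,\alpha,\delta)$, and likewise for $\overline r$. Therefore whenever the quantity for $Z_2$ vanishes, so does the one for $Z_1$, and the critical values at scale $\delta$ are ordered. Letting $\delta\to0$ gives (2). Part (3) follows immediately from (2), since $Z_i\subset Z$ for each $i$; unlike Proposition \ref{3.1p}(3), no countable stability is claimed for capacities.

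Part (4) is the only step needing an argument. The plan is to show that $g$ transports Bowen balls to Bowen balls up to a change of radius, and transports Birkhoff sums exactly.
\begin{itemize}
\item Since $g$ commutes with every $f_i^{(j)}$, we have $f_w^{1,t}\circ g=g\circ f_w^{1,t}$ for every word $w$ and time $t$.
\item By compactness, $g$ and $g^{-1}$ are uniformly continuous. So for $\delta>0$ there is $\delta'>0$, with $\delta'\to0$ as $\delta\to0$, such that $d(u,v)<\delta'$ implies $d(g u,g v)<\delta$. Applying this along all orbit points $f_w^{1,t}x$ gives $g(B_n(x,\delta'))\subset B_n(gx,\delta)$, and symmetrically with $g^{-1}$.
\item For Birkhoff sums, $S_w(\varphi\circ g^{-1})(gx)=\sum_t\varphi(g^{-1}f_w^{1,t}gx)=\sum_t\varphi(f_w^{1,t}x)=S_w\varphi(x)$, and maximizing over $w$ gives $S_n(\varphi\circ g^{-1})(gx)=S_n\varphi(x)$.
\end{itemize}

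Now let $\{B_{n_i}(x_i,\delta')\}$ be a cover of $Z$. Then $\{B_{n_i}(gx_i,\delta)\}$ covers $g(Z)$ with the same summands. Hence $M(g(Z),\boldsymbol f,\varphi\circ g^{-1},\alpha,\delta,N)\le M(Z,\boldsymbol f,\varphi,\alpha,\delta',N)$, and the same bound holds for $R$. This yields $P_{g(Z)}(\varphi\circ g^{-1},\delta)\le P_Z(\varphi,\delta')$. Letting $\delta\to0$, so that $\delta'\to0$, gives one inequality. The reverse inequality comes from applying the same argument to $g^{-1}$, $g(Z)$ and the potential $\varphi\circ g^{-1}$. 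The capacity versions are identical, since fixed-length covers map to fixed-length covers.

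The only mild obstacle is bookkeeping the radius change $\delta\leftrightarrow\delta'$. It is harmless because the quantities are monotone in $\delta$ and we only use limits as $\delta\to0$, so Theorem \ref{4.2t} is not even needed here.
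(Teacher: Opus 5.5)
Your proposal is correct, and it carries out the direct verification from the definitions that the paper only asserts (the paper gives no details beyond citing the Carath\'eodory--Pesin structure). This includes the one nontrivial point, the radius bookkeeping in (4), which you handle through uniform continuity of $g^{\pm 1}$, the commutation $f_w^{1,t}\circ g=g\circ f_w^{1,t}$, and the exact identity $S_n(\varphi\circ g^{-1})(gx)=S_n\varphi(x)$. One remark on (1): your argument uses the empty subcollection as an admissible cover of $\varnothing$, which the phrase ``finite or countable subcollections'' permits and which the statement actually requires, since with nonempty covers and $\varphi\equiv c>0$ the critical value would be $c>0$.
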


\subsection{Weighted topological pressure}

For any $N \in \mathbb{N} $, $\alpha \in \mathbb{R}$, $\delta >0$, and any bounded function $h: X \rightarrow \mathbb{R}$, define
\[
W(\boldsymbol{f},\varphi, h, \alpha, \delta,N):=\inf \left\{ \sum_{B_{n_{i}}(x_{i}, \delta)} c_{i} \exp (-\alpha n_{i}+ S_{n_i} \varphi (x_i))\right\}
\]
where the infimum is taken over all finite or countable families $\{(B_{n_{i}}(x_{i},\delta),c_{i})\}$ such that $0 < c_{i} < \infty$, $x_{i} \in X$, $n_{i} \geq N$ for all $i$, and
\[
\sum_{i} c_{i} \chi_{B_{i}} \geq h
\]
where $B_{i}: =B_{n_{i}}(x_{i},\delta)$ and $\chi_{A}$ denotes the characteristic function of $A$, i.e., $\chi_{A}(x)=1$ if $x \in A$ and $0$ if $x \in X\setminus A$.

For $Z \subset X$ and $h=\chi _{Z}$, we set $W(\boldsymbol{f},\varphi, Z, \alpha, \delta,N)=W(\boldsymbol{f},\varphi, \chi _{Z}, \alpha, \delta,N)$. The quantity is nondecreasing as $N$ increases, hence the following limit exists:
\[
W(\boldsymbol{f},\varphi, Z, \alpha, \delta)= \lim_{N \rightarrow \infty}W(\boldsymbol{f},\varphi, Z, \alpha, \delta,N).
\]
Similar to the argument of Pesin  in  \cite{Pesin1997}, there exists a unique critical value $P^{W}_{Z}(\boldsymbol{f},\varphi,\delta)$ such that the quantity $W(\boldsymbol{f},\varphi, Z, \alpha, \delta)$  jumps from $\infty$ to $0$, that is,
\[
P^{W}_{Z}(\boldsymbol{f},\varphi,\delta)=\inf\{\alpha: W(\boldsymbol{f},\varphi, Z, \alpha, \delta)=0\}=\sup\{\alpha: W(\boldsymbol{f},\varphi, Z, \alpha, \delta)=\infty\}.
\]
The quantity $P^{W}_{Z}(\boldsymbol{f},\varphi,\delta)$ is nondecreasing as $\delta$ decreases, hence the following limit exists:
\[
P^{W}_{Z}(\boldsymbol{f},\varphi)=\lim_{\delta \rightarrow 0} P^{W}_{Z}(\boldsymbol{f},\varphi,\delta).
\]
We call $P^{W}_{Z}(\boldsymbol{f},\varphi)$  the \emph{weighted topological pressure} on $Z$.

Proposition \ref{p:W=B} establishes the equivalence between the Pesin–Pitskel topological pressure and the weighted topological pressure.
The proof relies on the following Vitali covering lemma (see \cite[Theorem 2.1]{Mattila1995}) and follows the approach of \cite{FengHuang2012}.

\begin{lemma}\label{5r}
Let \((X, d)\) be a compact metric space and \(\mathcal{B} = \{B(x_i, r_i)\}_{i \in \mathcal{I}}\) be a family of closed (or open) balls in \(X\). Then there exists a finite or countable subfamily \(\mathcal{B}' = \{B(x_i, r_i)\}_{i \in \mathcal{I}'}\) of pairwise disjoint balls in \(\mathcal{B}\) such that
\[
\bigcup_{B \in \mathcal{B}} B \subseteq \bigcup_{\substack{B(x_i, r_i) \in \mathcal{B}'}} B(x_i, 5r_i).
\] 
\end{lemma}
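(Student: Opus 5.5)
The plan is the classical greedy selection by dyadic size classes, as in Mattila's proof. Compactness replaces Zorn's lemma, since it makes each selection step finite.

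\emph{Reduction to bounded radii.} First dispose of the degenerate case. If $\sup_{i\in\mathcal I} r_i=\infty$, choose a single index $i_0$ with $r_{i_0}>\operatorname{diam}X$. Then $B(x_{i_0},5r_{i_0})=X$, and $\mathcal B'=\{B(x_{i_0},r_{i_0})\}$ works. Hence we may assume $R:=\sup_i r_i<\infty$, and we discard balls with $r_i=0$ in the open case, since they are empty.

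\emph{Layered greedy selection.} For $k\ge0$ put $\mathcal I_k=\{i: R2^{-k-1}<r_i\le R2^{-k}\}$, so that $\mathcal I=\bigcup_k\mathcal I_k$. Define finite sets $\mathcal J_k\subset\mathcal I_k$ inductively. Let $\mathcal J_k$ be a maximal subfamily of those $i\in\mathcal I_k$ whose balls are disjoint from every ball already selected in $\mathcal J_0\cup\cdots\cup\mathcal J_{k-1}$ and pairwise disjoint among themselves.

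The key point is that such a maximal family is finite and can be built by finitely many greedy choices. If $B(x_i,r_i)$ and $B(x_j,r_j)$ are disjoint, then $x_j\notin B(x_i,r_i)$, so $d(x_i,x_j)\ge r_i>R2^{-k-1}$. Thus the centers of the balls in $\mathcal J_k$ form an $R2^{-k-1}$-separated set in the compact (hence totally bounded) space $X$, so there are only finitely many of them. Set $\mathcal I'=\bigcup_k\mathcal J_k$. This family is countable and consists of pairwise disjoint balls.

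\emph{The $5r$ covering property.} Take any $i\in\mathcal I_k$. By maximality, $B(x_i,r_i)$ meets some $B(x_j,r_j)$ with $j\in\mathcal J_0\cup\cdots\cup\mathcal J_k$; this is trivially true if $i$ itself was selected. Then $r_j>R2^{-k-1}\ge r_i/2$, that is, $r_i<2r_j$. Pick $z$ in the intersection. For any $y\in B(x_i,r_i)$,
\[
d(y,x_j)\le d(y,x_i)+d(x_i,z)+d(z,x_j)\le 2r_i+r_j<5r_j .
\]
This holds for both closed and open balls, since the strict inequality comes from $r_i<2r_j$. Therefore $B(x_i,r_i)\subseteq B(x_j,5r_j)$, which gives the claimed inclusion.

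\emph{Main obstacle.} The only delicate point is ensuring that the selected family is countable without invoking Zorn's lemma. The separation argument above handles this, because it makes each layer finite. The other point needing care is the unbounded-radius case, which was handled at the start.
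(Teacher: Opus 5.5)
Your proof is correct. The paper gives no argument for this lemma and only cites Mattila, so the natural comparison is with the classical sequential greedy proof. That proof picks $B_{k+1}$ disjoint from $B_1,\dots,B_k$ with radius more than half the supremum of the radii of the balls still disjoint from $B_1,\dots,B_k$. It then shows the selected radii tend to $0$ if the process does not stop, and compares each ball with the first selected ball it meets.

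You instead sort the balls into dyadic radius classes. Class by class, you take a maximal disjoint subfamily among the balls missing all earlier selections (the arrangement used, e.g., by Evans--Gariepy and Heinonen), and total boundedness of $X$ forces each class to contribute only finitely many balls. What this buys you is that three facts come directly from the construction: existence of the maximal subfamilies, countability of $\mathcal I'$, and the fact that every ball meets a selected ball of more than half its radius. You need no Zorn's lemma, no separability argument and no limiting step. The sequential version yields a single ordered sequence, but it needs your separation observation anyway (infinitely many disjoint balls with radii bounded below would give an infinite separated set in $X$) to make the radii tend to $0$.

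Both routes finish with the same estimate $2r_i+r_j<5r_j$ from $r_i<2r_j$, which, as you note, handles open and closed balls at once. Your argument in fact uses only total boundedness of $X$.

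One convention should be stated rather than left implicit: radii must be positive for closed balls too. A closed ball of radius $0$ is a singleton, and the lemma fails for the family of all singletons in $[0,1]$. Your identity $\mathcal I=\bigcup_k\mathcal I_k$ relies on this.
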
 

\begin{proposition}\label{p:W=B}
Given $Z \subset X$, for any $\alpha \in \mathbb{R}$ and any $\varepsilon, \delta >0$, we have
\[
M(Z,\boldsymbol{f},\varphi,\alpha+\varepsilon,6\delta,N) \leq W(\boldsymbol{f},\varphi, Z, \alpha, \delta,N) \leq M(Z,\boldsymbol{f},\varphi,\alpha,\delta,N),
\]
when $N$ is large enough. As a result,
\[
P^{W}_{Z}(\boldsymbol{f},\varphi)=P_{Z}(\boldsymbol{f},\varphi).
\]
\end{proposition}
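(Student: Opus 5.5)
The right-hand inequality is immediate. Any cover $\{B_{n_i}(x_i,\delta)\}$ of $Z$ with $n_i\ge N$ gives an admissible weighted family once we take all $c_i=1$, since then $\sum_i\chi_{B_i}\ge\chi_Z$. Hence $W(\boldsymbol f,\varphi,Z,\alpha,\delta,N)\le M(Z,\boldsymbol f,\varphi,\alpha,\delta,N)$ for every $N$.

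For the left-hand inequality I would follow Feng--Huang. Fix a family $\{(B_{n_i}(x_i,\delta),c_i)\}_i$ with $n_i\ge N$ and $\sum_i c_i\chi_{B_i}\ge\chi_Z$. The goal is to show
\[
M(Z,\boldsymbol f,\varphi,\alpha+\varepsilon,6\delta,N)\le \sum_i c_i e^{-\alpha n_i+S_{n_i}\varphi(x_i)} \qquad (N \text{ large}).
\]
\emph{Reductions.} We may assume the right side is finite. By approximation we may also assume each $c_i$ is rational up to a factor $1+\eta$, the family is finite (first handling $Z$ replaced by the set where the partial sums exceed $1-\eta$, then taking an increasing union via the countable stability of $M$ in Proposition \ref{3.1p}-style arguments), and the $c_i$ are integers after multiplying by a common denominator. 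Then split by length: let $Z_n=\{x\in Z: \sum_{i:n_i=n}c_i\chi_{B_i}(x)\ge t_n\}$ with $t_n=\frac{(1-\eta)\,n^{-2}}{\sum_k k^{-2}}$-type weights. Then $Z\subset\bigcup_{n\ge N}Z_n$, and it suffices to cover each $Z_n$ by balls $B_n(y,6\delta)$ whose total weight is at most $C\,t_n^{-1}\sum_{i:n_i=n}c_ie^{-\alpha n+S_n\varphi(x_i)}$. Summing over $n$, the factor $n^{2}$ is absorbed by $e^{-\varepsilon n}$ once $N$ is large; this is where $\alpha+\varepsilon$ enters.

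\emph{Single-length step.} Fix $n$. This is the core of the argument and I expect it to be the main obstacle. With integer multiplicities, a point of $Z_n$ lies in at least $k\approx t_n\cdot(\text{scale})$ of the balls $B_n(x_i,\delta)$, counted with multiplicity. The standard Feng--Huang combinatorial lemma (via the Vitali lemma, Lemma \ref{5r}, applied in the metric $d_n$) extracts subfamilies of pairwise disjoint balls among these. Their $5\times$ enlargements $B_n(x_i,5\delta)\subset B_n(x_i,6\delta)$ cover $Z_n$, and the total chosen weight is at most $\frac1k$ of the full weighted sum. The Vitali lemma applies because all balls have the same length $n$, so they are genuine metric balls in the compact metric $d_n$ (which is a metric since $d_n$ is a max of continuous pseudometrics dominating $d$).

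A subtlety is that Lemma \ref{5r} loses control of the weights $e^{S_n\varphi(x_i)}$ when a chosen ball replaces others. To handle this, I would run the selection in decreasing order of $e^{S_n\varphi(x_i)}$, i.e.\ apply Vitali within classes where $S_n\varphi(x_i)$ is nearly constant. Alternatively, since we keep the centers $x_i$ themselves, the weight of each chosen ball is its own $e^{S_n\varphi(x_i)}$, and only the covering uses radius $6\delta$; this is why $6\delta$ rather than $5\delta$ appears, giving slack for the open/closed issue. Letting $\eta\to0$ and taking the infimum over families gives the left inequality.

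\emph{Conclusion.} From the two inequalities, letting $N\to\infty$ gives
\[
m(Z,\boldsymbol f,\varphi,\alpha+\varepsilon,6\delta)\le W(\boldsymbol f,\varphi,Z,\alpha,\delta)\le m(Z,\boldsymbol f,\varphi,\alpha,\delta).
\]
Hence
\[
P_Z(\boldsymbol f,\varphi,6\delta)-\varepsilon\le P^W_Z(\boldsymbol f,\varphi,\delta)\le P_Z(\boldsymbol f,\varphi,\delta).
\]
Letting $\delta\to0$ and then $\varepsilon\to0$ yields $P^W_Z(\boldsymbol f,\varphi)=P_Z(\boldsymbol f,\varphi)$.
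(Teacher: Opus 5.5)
Your architecture (the trivial right inequality, Federer's integer-multiplicity Vitali selection at a fixed length $n$, and splitting $Z$ into level sets with thresholds of order $n^{-2}$ absorbed by $e^{-n\varepsilon}$) matches the paper's Steps 1 and 3. The gap is your reduction to a \emph{finite} weighted family, which is not valid. You propose to prove the bound for $Z^{(m)}=\{x\in Z:\sum_{i\le m}c_i\chi_{B_i}(x)>1-\eta\}$ and then pass to $Z=\bigcup_m Z^{(m)}$ by ``countable stability of $M$''. At fixed $N$, however, $M(\cdot,\boldsymbol f,\varphi,\alpha+\varepsilon,6\delta,N)$ is only a countably subadditive outer measure of Method~I type. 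Subadditivity gives $M(Z)\le\sum_m M(Z^{(m)})$, which is useless for an increasing union, and such outer measures need not be continuous from below along increasing unions of arbitrary sets. Proposition~\ref{3.1p}(3) concerns the critical value $P_Z$, not $M$ at fixed $N$ and $\alpha$. It cannot be invoked anyway, because your sets $Z^{(m)}$ depend on the chosen family and hence on $N$. The same problem reappears in your single-length step: $\{i:n_i=n\}$ may be infinite, infinitely many $c_i$ have no common denominator, and Federer's argument only runs on finitely many balls.

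The missing idea is a way to pass from finitely to countably many balls \emph{of one fixed length $n$} that uses compactness of $(X,d_n)$ rather than any continuity of $M$. The paper does this in its Step 2. The disjoint subfamilies obtained for the truncations $I_{n,m}$ have $\delta$-separated centers in $d_n$, so their cardinalities are uniformly bounded. Along a subsequence the center sets converge in the Hausdorff metric to a finite set $E_{n,t}$, and $Z_{n,t}=\bigcup_m Z_{n,m,t}\subset\bigcup_{y\in E_{n,t}}B_n(y,6\delta)$, with $S_n\varphi(y)\le S_n\varphi(x)+n\gamma$ for matched centers. The loss $e^{n\gamma}$ and the factor $n^2$ are absorbed by choosing $\gamma<\varepsilon$ and $N$ with $n^2e^{n(\gamma-\varepsilon)}\le 1$. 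This, not an open/closed issue, is why the radius is $6\delta$ rather than $5\delta$: the extra $\delta$ absorbs the motion of the centers in the limit.

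Within your framework an equally good fix is to snap each center $x_i$ with $n_i=n$ to a point $z$ of a finite $\rho$-net of $(X,d_n)$ and merge weights. Take $\rho\le\delta/5$, small enough that $d_n(x,y)<\rho$ implies $|S_n\varphi(x)-S_n\varphi(y)|\le n\gamma$. This gives a finite family of balls $B_n(z,\delta+\rho)$ dominating the original one, with weights inflated by at most $e^{n\gamma}$. Federer's argument applies to it, and $5(\delta+\rho)\le 6\delta$.

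Your worry about losing control of the weights $e^{S_n\varphi(x_i)}$ under Vitali selection is unfounded. In Federer's scheme each selected ball keeps its own weight and is selected at most $c_i$ times, so no ordering is needed.
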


\begin{proof}
Let $Z \subset X$, $\alpha \in \mathbb{R}$ and $\varepsilon,\delta>0$.
Taking $h=\chi_{Z}$ and $c_{i}\equiv 1,$ we see that $W(\boldsymbol{f},\varphi, Z, \alpha, \delta,N) \leq M(Z,\boldsymbol{f},\varphi,\alpha,\delta,N)$ for each $N \in \mathbb{N}$.
In the following, we prove that
\[
  M(Z,\boldsymbol{f},\varphi,\alpha+\varepsilon,6\delta,N) \leq W(\boldsymbol{f},\varphi, Z, \alpha, \delta,N),
\]
 when $N $ is large enough.

Given any $0<\gamma<\varepsilon$, assume that $N \geq 2$ is such that $n^{2} e^{n(\gamma-\varepsilon)} \leq 1$ for $n \geq N$.
Let $\{(B_{n_{i}}(x_{i}, \delta),c_{i})\}_{i \in I}$ be a collection so that $I \subset \mathbb
{N}, 0 < c_{i} <\infty, n_{i} \geq N$ and
\[
\sum_{i} c_{i} \chi_{B_{i}} \geq \chi_{Z}
\]
where $B_{i}:=B_{n_{i}}(x_{i}, \delta)$.
In order to prove the left-hand inequality, it suffices to show that
\begin{equation}\label{inequation}
  M(Z,\boldsymbol{f},\varphi,\alpha+\varepsilon,6\delta,N) \leq \sum_{i \geq 1} c_{i} \exp \Big(-\alpha n_{i}+S_{n_i} \varphi (x_i) \Big).
\end{equation}

Denote $I_{n}: =\{ i\in I: n_{i}=n\}$ and $I_{n,m}=\{ i \in I_{n}: i \leq m\}$ for $n \geq N$ and $m \in \mathbb{N}$.
For simplicity of notation, set $B_{i}: =B_{n_{i}}(x_i, \delta)$ and $5 B_{i}=B_{n_{i}}(x_{i},5\delta)$ for $i \geq 1.$
Without loss of generality, assume that $B_{i}\neq B_{j}$ for $i \neq j$.
For $t > 0,$ set
\[
Z_{n,t}=\left\{x \in Z: \sum_{i \in I_{n}} c_{i} \chi_{B_{i}}(x) > t \right\}
\]
and
\[
Z_{n,m,t}=\left\{x \in Z: \sum_{i \in I_{n,m}} c_{i} \chi_{B_{i}}(x)>t\right\}.
\]
We divide the proof of (\ref{inequation}) into the following three steps.

Step 1. For each $n \geq N, m \in \mathbb{N}$ and $t >0$, there exists a finite set $J_{n,m,t} \subset I_{n,m}$ such that the balls $B_{i} ~(i \in J_{n,m,t})$ are pairwise disjoint, $Z_{n,m,t} \subset \bigcup_{i \in J_{n,m,t}} 5B_{i}$ and
\[
\sum_{i \in J_{n,m,t} } \exp \Big(-\alpha n+S_{n} \varphi (x_i)\Big) \leq
\frac{1}{t}  \sum_{i \in I_{n,m} } c_{i} \exp \Big(-\alpha n+S_{n} \varphi (x_i)\Big) .
\]
To prove the above inequality, we use Federer's method ({\cite[2.10.24]{Federer1969}}). Since
$I_{n,m}$ is finite, by approximating the $c_i$’s from above, we may assume that each $c_i$
is a positive rational, and then multiplying with a common denominator we may
assume that each $c_i$ is a positive integer. Let $p$ be the smallest integer satisfying $p \geq t.$
Denote $\mathcal{C}_{0}=\mathcal{B}_{0}=\{B_{i}: i \in I_{n,m}\}$, and define $u: \mathcal{B}_{0} \rightarrow \mathbb{Z}$  by $u(B_{i})=c_{i}$. We can inductively define integer-valued functions $v_{0}, v_{1}, \ldots, v_{p}$ on $\mathcal{B}_{0}$ and subfamilies  $\mathcal{B}_{1},\mathcal{B}_{2},\ldots,\mathcal{B}_{p}$ with $v_{0}=u$.
Using Lemma \ref{5r} (in which we take the metric $d_n$ instead of $d$), there exists a pairwise disjoint subfamily $\mathcal{B}_{1}$ of $\mathcal{B}_{0}$ such that $\bigcup_{B\in \mathcal{B}_{0}} B \subset \bigcup _{B\in \mathcal{B}_{1}} 5B,$ and hence $Z_{n,m,t} \subset \bigcup _{B\in \mathcal{B}_{1}} 5B.$
Define
\begin{align*}
v_{1}(B)=\begin{cases} v_{0}(B)-1,  &\text{for $B \in  \mathcal{B}_{1}$},\\
v_{0}(B), &\text{for $B \in \mathcal{B}_{0} \setminus \mathcal{B}_{1}$}.
\end{cases}
\end{align*}
Let $\mathcal{C}_1= \{ B \in \mathcal{B}_0 : v_1(B) \ge 1\}.$
Since the subfamily $\mathcal{B}_1$ is pairwise disjoint,
\[
Z_{n,m,t}
\subseteq
\left\{
x : \sum_{B\in \mathcal B_0: B \ni x} v_1(B) \ge p-1
\right\},
\]
which implies that every $x\in Z_{n,m,t}$ belongs to some ball
$B\in \mathcal B_0$ with $v_1(B)\ge 1$.
Thus
\(
Z_{n,m,t}\subset \bigcup_{B\in \mathcal C_1} B .
\)
By Lemma~\ref{5r}, we may choose a pairwise disjoint subfamily
$\mathcal B_2$ of $\mathcal C_1$ such that
\(
\bigcup_{B\in \mathcal C_1} B
\subset
\bigcup_{B\in \mathcal B_2} 5B,
\)
and hence
\(
Z_{n,m,t}\subset \bigcup_{B\in \mathcal B_2} 5B .
\)
Define
\[
v_2(B)=
\begin{cases}
v_1(B)-1, & \text{if } B\in \mathcal B_2,\\[4pt]
v_1(B),   & \text{if } B\in \mathcal B_0\setminus \mathcal B_2 .
\end{cases}
\]
Let $\mathcal{C}_2 = \{ B \in \mathcal{B}_0 : v_2(B) \ge 1 \}$. Then 
\[
Z_{n,m,t} \subset \left\{ x : \sum_{B \in \mathcal{B}_0 : B \ni x} v_2(B) \ge p - 2 \right\}
\]
which implies that every $x \in Z_{n,m,t}$ belongs to some ball $B \in \mathcal{B}_0$ with $v_2(B) \ge 1$. Thus $Z_{n,m,t} \subset \bigcup_{B \in \mathcal{C}_2} B$. Repeating this process, we get subfamilies $\mathcal{C}_{j-1}$ and disjoint subfamilies $\mathcal{B}_j$ of $\mathcal{B}_0$ such that $\mathcal{B}_j \subset \mathcal{C}_{j-1}$, $Z_{n,m,t} \subset \bigcup_{B \in \mathcal{B}_j} 5B$, $Z_{n,m,t} \subset \bigcup_{B \in \mathcal{C}_{j-1}} B$ and
\[
v_j(B) =
\begin{cases}
v_{j-1}(B) - 1, & \text{for } B \in \mathcal{B}_j, \\
v_{j-1}(B), & \text{for } B \in \mathcal{B}_0 \setminus \mathcal{B}_j.
\end{cases}
\]
Therefore,
\begin{align*}
 & \sum_{j=1}^{p} \sum_{B_{i} \in \mathcal{B}_{j}} \exp\left (-\alpha n+S_{n} \varphi (x_i)\right) \\
  &= \sum_{j=1}^{p} \sum_{B_{i} \in \mathcal{B}_{j}} (v_{j-1}(B_{i})-v_{j}(B_{i})) \exp \left(-\alpha n+S_{n} \varphi (x_i)\right) \\
  &\leq  \sum_{B_{i} \in \mathcal{B}_{0}} \sum_{j=1}^{p} (v_{j-1}(B_{i})-v_{j}(B_{i})) \exp \left(-\alpha n+S_{n} \varphi (x_i)\right) \\
  &\leq  \sum_{B_{i} \in \mathcal{B}_{0}} u(B_{i}) \exp \left(-\alpha n+S_{n} \varphi (x_i)\right) \\
  &=\sum_{i \in I_{n,m}} c_{i} \exp \left(-\alpha n+S_{n} \varphi (x_i)\right).\\
\end{align*}
Choose $j_{0} \in \{1,2,\ldots,p\}$ such that $ \sum_{B_{i} \in \mathcal{B}_{j_{0}}} \exp \left(-\alpha n+S_{n} \varphi (x_i)\right)$ is the smallest.
Then
\begin{align*}
\sum_{B_{i} \in \mathcal{B}_{j_{0}}} \exp \left(-\alpha n+S_{n} \varphi (x_i)\right)
\leq& \frac{1}{p}  \sum_{i \in I_{n,m}} c_{i} \exp \left(-\alpha n+S_{n} \varphi (x_i)\right) \\
\leq& \frac{1}{t}  \sum_{i \in I_{n,m}} c_{i} \exp \left(-\alpha n+S_{n} \varphi (x_i)\right) .
\end{align*}
Hence $J_{n,m,t}= \{i \in I_{n,m}: B_{i} \in \mathcal{B}_{j_{0}}\}$ is as desired.

Step 2. For each $n \geq N$ and $t > 0$, we have
\begin{equation}
   M(Z_{n,t},\boldsymbol{f},\varphi,\alpha+\varepsilon,6\delta,N) \leq \frac{1}{n^{2}t}\sum_{i \in I_{n}} c_{i} \exp\bigg(-\alpha n+S_{n} \varphi (x_i)\bigg).
\end{equation}
Without loss of generality, assume $Z_{n,t} \neq \emptyset$, otherwise there is nothing to prove.
Since $Z_{n,m,t} \uparrow Z_{n,t}$  ($m \rightarrow \infty$), $Z_{n,m,t}\neq \emptyset$ for all sufficiently large $m$.
Let $J_{n,m,t}$ be the sets constructed in Step 1, then $J_{n,m,t}\neq \emptyset$ when $m$ is large enough.
Define $E_{n,m,t}=\{x_{i}: i \in J_{n,m,t}\}$.
Since the space of all non-empty compact subsets of $X$ is compact with respect to the Hausdorff distance,
there exists a subsequence $\{m_{j}\}_{j\geq 1}$ of positive integers and a non-empty compact set $E_{n,t} \subset X$ such that $E_{n,m_{j},t}$ converges to $E_{n,t}$ in the Hausdorff distance as $j \rightarrow \infty$.
Since the distance of any two points in $E_{n,m,t}$ is not less than $\delta$ (with respect to $d_{n}$), so do the points in $E_{n,t}$.
Thus $E_{n,t}$ is a finite set, moreover, $\#(E_{n,m_{j},t})=\#(E_{n,t})$ for sufficiently large $j$.
Then the following holds
\[
Z_{n,m_{j},t} \subseteq  \bigcup_{i \in J_{n,m_{j},t}} 5 B_{i} = \bigcup_{x \in E_{n,m_{j},t}} B_{n}(x,5\delta)    \subseteq   \bigcup_{y \in E_{n,t}} B_{n}(y, 5.5 \delta)
\]
when $j$ is large enough, and thus $Z_{n,t} \subseteq \bigcup_{y\in E_{n,t}} B_{n} (y,6\delta) .$
By the way, $\#(E_{n,m_{j},t})=\#(E_{n,t})$ when $j$ is large enough and $\varphi$ is continuous, hence for above $\gamma >0$, when $m_j$ is large enough, each $y \in E_{n,t}$ associates to a unique $x \in E_{n,m_j,t}$ such that
\[
S_{n} \varphi(y) \leq S_{n} \varphi(x) +n\gamma.
\]
Using the result in Step 1, we have
\begin{align*}
   \sum_{y \in E_{n,t}} \exp\left (-\alpha n+S_{n} \varphi(y)\right) & \leq \sum_{x \in E_{n,m_{j},t}} \exp\left (-\alpha n+S_{n} \varphi(x)+n\gamma\right)  \\
  & \leq \frac{e^{n\gamma}}{t}  \sum_{i \in I_{n}} c_{i} \exp \left(-\alpha n+S_{n} \varphi(x_{i})\right) .
\end{align*}
 Hence
 \begin{align*}
      M(Z_{n,t},\boldsymbol{f},\varphi,\alpha+\varepsilon,6\delta,N)& \leq  \sum_{y \in E_{n,t}} \exp\left (-(\alpha+\varepsilon)n+S_{n} \varphi(y)\right) \\
   & =\frac{1}{e^{n\varepsilon}}  \sum_{y \in E_{n,t}} \exp\left (-\alpha n+S_{n} \varphi(y)\right) \\
   &\leq \frac{ e^{n\gamma}}{t  e^{n\varepsilon}}  \sum_{i \in I_{n}}  c_{i}\exp\left (-\alpha n+S_{n} \varphi(x_{i})\right) \\
    &\leq \frac{1}{t  n^{2}}  \sum_{i \in I_{n}}  c_{i}\exp\left (-\alpha n+S_{n} \varphi(x_{i})\right) .\\
 \end{align*}

Step 3. For any $t \in (0,1)$, we have
\[
M(Z,\boldsymbol{f},\varphi,\alpha+\varepsilon,6\delta,N) \leq \frac{1}{t }  \sum_{i \in I}  c_{i}\exp\left (-\alpha n_{i}+S_{n_i} \varphi(x_{i})\right) .
\]
Fix $t\in (0,1)$. Note that $\sum_{n=N}^{\infty}n^{-2} <1$ and $Z \subset \bigcup_{n=N}^{\infty} Z_{n,n^{-2}t}.$
Hence by Proposition \ref{3.1p}, we have
\begin{align*}
M(Z,\boldsymbol{f},\varphi,\alpha+\varepsilon,6\delta,N) &\leq \sum_{n=N}^{\infty}  M(Z_{n,n^{-2}t},\boldsymbol{f},\varphi,\alpha+\varepsilon,6\delta,N)  \\
& \leq \sum_{n=N}^{\infty}  \frac{1}{t}  \sum_{i \in I_{n}}  c_{i}\exp\left (-\alpha n+S_{n} \varphi(x_{i})\right) \\
&=\frac{1}{t} \sum_{i \in I}  c_{i}\exp\left (-\alpha n_{i}+S_{n} \varphi(x_{i})\right).
\end{align*}
Letting $t\to 1$ gives
\[
M(Z,\boldsymbol{f},\varphi,\alpha+\varepsilon,6\delta,N)
\le W(\boldsymbol{f},\varphi,Z,\alpha,\delta,N).
\]
\end{proof}

\section{Main Results and Proofs}
Let $M(X)$ denote the set of all Borel probability measures on $X$.
For later use, we first introduce the measure-theoretic pressure
associated with $\boldsymbol{f}$.

\begin{definition}
For $\mu \in M(X)$, define the \emph{measure-theoretic pressure} of $\mu$ for $\boldsymbol{f}$
 \[
 \underline{P}_{\mu}(\boldsymbol{f},\varphi):= \int \underline{P}_{\mu}(\boldsymbol{f},\varphi, x) d \mu(x),
 \]
 where
\[
\underline{P}_{\mu}(\boldsymbol{f},\varphi, x):=\lim_{r \rightarrow 0} \liminf_{n \rightarrow \infty} \frac{-\log \mu \big(B_{n}(x,r)\big)+S_{n} \varphi (x)}{n}.
\]
\end{definition}
\begin{remark}
When $\varphi \equiv 0$, we have the definition of measure--theoretic
entropy  of $\mu$, which we denote by
\[
\underline{h}_{\mu}(\boldsymbol{f})
:=
\int \underline{h}_{\mu}(\boldsymbol{f}, x)\, d\mu(x),
\]
where
\[
\underline{h}_{\mu}(\boldsymbol{f}, x)
:=
\lim_{r \rightarrow 0} \liminf_{n \rightarrow \infty}
-\frac{1}{n} \log \mu \bigl(B_{n}(x,r)\bigr).
\]
\end{remark}

\begin{lemma}
Let $(X,\boldsymbol f)$ be an NAIFS on a compact metric space $(X,d)$,
$\varphi \in C(X,\mathbb R)$, and $\mu \in M(X)$.
Then the function
\(
x \longmapsto \underline{P}_{\mu}(\boldsymbol f,\varphi,x)
\)
is Borel measurable and $\mu$-integrable. Consequently, the measure-theoretic pressure
$\underline{P}_{\mu}(\boldsymbol f,\varphi)$
is well defined.
In particular, the function
\(
x \longmapsto \underline h_\mu(\boldsymbol f,x)
\)
is Borel measurable and $\mu$-integrable, and hence
$\underline h_\mu(\boldsymbol f)$ is well defined.
\end{lemma}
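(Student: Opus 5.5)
Since $\varphi$ is continuous we have $|S_n\varphi(x)|\le n\|\varphi\|$ for every $x$, and so
\[
\frac{-\log\mu(B_n(x,r))+S_n\varphi(x)}{n}=\frac{-\log\mu(B_n(x,r))}{n}+\frac{S_n\varphi(x)}{n}.
\]
The plan is to prove measurability of each piece, take $\liminf_n$ and then $\lim_{r\to0}$ through a countable sequence $r=1/k$, and check that the resulting function is bounded below and has finite integral (or at least a well-defined integral). This needs some care with the limit in $r$. The quantity $\liminf_n$ of $\frac{-\log\mu(B_n(x,r))}{n}$ is nonincreasing in $r$ because the balls grow with $r$. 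The Birkhoff term does not depend on $r$. Hence the inner $\liminf$ of the sum is monotone in $r$ up to the Birkhoff term, and the full expression is nonincreasing in $r$ because the Birkhoff term is fixed. So $\lim_{r\to0}$ exists and equals $\lim_{k\to\infty}$ along $r=1/k$, and a countable limit of measurable functions is measurable.

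\emph{Measurability of $x\mapsto S_n\varphi(x)$.} Each $f^{1,i}_w$ is continuous, so $S_w\varphi$ is continuous. Since $I^{1,n}$ is finite, $S_n\varphi=\max_w S_w\varphi$ is continuous as well.

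\emph{Measurability of $x\mapsto\mu(B_n(x,r))$.} The function $d_n$ is continuous on $X\times X$, being a finite maximum of continuous functions. Therefore the set $\{(x,y):d_n(x,y)<r\}$ is open. I will show that $x\mapsto\mu(B_n(x,r))$ is lower semicontinuous. Suppose $x_j\to x$ and $y\in B_n(x,r)$. Then $d_n(x_j,y)<r$ for all large $j$, so $\chi_{B_n(x,r)}\le\liminf_j\chi_{B_n(x_j,r)}$ pointwise. Fatou's lemma then gives $\mu(B_n(x,r))\le\liminf_j\mu(B_n(x_j,r))$. It follows that $-\log\mu(B_n(x,r))$ is Borel, as an extended-real-valued function that is infinite only where the measure vanishes. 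Consequently $\liminf_n$ of the whole expression is Borel for each fixed $r$, and so is the limit as $r\to 1/k\to 0$.

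\emph{Integrability.} The lower bound is immediate: $-\log\mu\ge0$, so $\underline P_\mu(\boldsymbol f,\varphi,x)\ge-\|\varphi\|$. The main obstacle is the upper bound, since $\underline P_\mu(\boldsymbol f,\varphi,x)$ could a priori be $+\infty$ on a set of positive measure. To control this I would use the fact that $\underline{h}_\mu(\boldsymbol f,x)$ is finite for $\mu$-a.e.\ $x$, or at least has finite integral, by comparing it with covering numbers.

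Fix $r$. Let $F_n$ be a finite $(n,r/2)$-spanning set with respect to $d_n$; one exists because $d_n$ is continuous and $X$ is compact. The set $A_n=\{x:\mu(B_n(x,r))<e^{-n\lambda}\}$ is covered by the balls $B_n(y,r/2)$ with $y\in F_n$. Any such ball that meets $A_n$ is contained in $B_n(x,r)$ for some $x\in A_n$, and therefore has measure less than $e^{-n\lambda}$. This gives $\mu(A_n)\le\#F_n\,e^{-n\lambda}$.

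If there is no exponential bound on $\#F_n$, which is the case for a general NAIFS without equicontinuity or growth assumptions on $\#I^{(j)}$, this argument only yields a quasi-integrable (value in $(-\infty,\infty]$) function. The honest conclusion in that case is that the integral exists in $[-\|\varphi\|,\infty]$. So I would prove measurability and the lower bound rigorously. For integrability I would argue via the Borel–Cantelli estimate $\mu(A_n)\le\#F_n e^{-n\lambda}$ when $\limsup\frac1n\log\#F_n<\infty$, and otherwise interpret ``$\mu$-integrable'' as the integral being well defined in $(-\infty,+\infty]$. The statement for $\underline h_\mu$ is the special case $\varphi\equiv0$.
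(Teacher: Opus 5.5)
Your proposal is correct and follows essentially the paper's route. Borel measurability comes from lower semicontinuity of $x\mapsto\mu(B_n(x,r))$: your Fatou argument with $\chi_{B_n(x,r)}\le\liminf_j\chi_{B_n(x_j,r)}$ is exactly the paper's inclusion $B_n(x_0,\varepsilon)\subseteq\liminf_m B_n(x_m,\varepsilon)$, which shows that $\{x:\mu(B_n(x,\varepsilon))\le c\}$ is closed. Integrability of the negative part follows, as in the paper, from the bound $\underline{P}_{\mu}(\boldsymbol f,\varphi,x)\ge-\|\varphi\|$. Your continuity of $S_n\varphi$ and your monotone reduction of $r\to0$ to $r=1/k$ fill in steps the paper leaves implicit.

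Like the paper, you only obtain an integral well defined in $(-\infty,+\infty]$, and your caveat on this point is accurate rather than a gap, because finite integrability can genuinely fail. For example, take $\#I^{(j)}=1$, $f^{(j)}(x)=2^jx \bmod 1$ on the circle and Lebesgue $\mu$; one gets $\mu(B_n(x,2^{-k}))\le 2^{-(k-1)(n-k)}$, so $\underline h_\mu(\boldsymbol f,x)=+\infty$ for every $x$. For the same reason, you should not call a.e.\ finiteness of $\underline h_\mu$ a ``fact'' without a finite-entropy hypothesis.
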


\begin{proof}
Since the negative part of
$\underline{P}_{\mu}(\boldsymbol f,\varphi,x)$ is finite, it suffices to show that the map
\(
x \longmapsto \mu\bigl(B_n(x,\varepsilon)\bigr)
\)
is Borel measurable for every $n\in\mathbb N$ and $\varepsilon>0$.
To this end, fix $c\in\mathbb R$ and define
\[
A_c := \bigl\{ x\in X : \mu(B_n(x,\varepsilon)) \le c \bigr\}.
\]
We shall show that $A_c$ is closed.
Let $\{x_m\}_{m\ge1}\subset A_c$ be a sequence such that $x_m\to x_0$ as
$m\to\infty$.
For any $y\in B_n(x_0,\varepsilon)$, we have
\[
\max_{w \in I^{1,n}}\max_{0 \leq i \leq n}d\bigl(f^{1,i}_{w}(x_0), f^{1,i}_{w}(y)\bigr) < \varepsilon.
\]
By continuity of each composition $f^{1,i}_{w}$, it follows that
\[
d\bigl(f^{1,i}_{w}(x_m), f^{1,i}_{w}(y)\bigr)
\le
d\bigl(f^{1,i}_{w}(x_m), f^{1,i}_{w}(x_0)\bigr)
+
d\bigl(f^{1,i}_{w}(x_0), f^{1,i}_{w}(y)\bigr)
< \varepsilon
\]
for all sufficiently large $m$ and all $0\le i \le n$.
Hence,
\[
y\in \liminf_{m\to\infty} B_n(x_m,\varepsilon),
\]
which implies
\[
B_n(x_0,\varepsilon)
\subseteq
\liminf_{m\to\infty} B_n(x_m,\varepsilon)
=
\bigcup_{m=1}^{\infty}\bigcap_{k=m}^{\infty} B_n(x_k,\varepsilon).
\]
Therefore,
\[
\mu\bigl(B_n(x_0,\varepsilon)\bigr)
\le
\mu\left(
\bigcup_{m=1}^{\infty}\bigcap_{k=m}^{\infty} B_n(x_k,\varepsilon)
\right)
\le
\liminf_{m\to\infty} \mu\bigl(B_n(x_m,\varepsilon)\bigr)
\le c.
\]
This shows that $x_0\in A_c$, and hence $A_c$ is closed.
Consequently, the function $x\mapsto \mu(B_n(x,\varepsilon))$ is Borel measurable.
\end{proof}

We restate Lemma~1 in \cite{MaWen2008} below in a form analogous to the classical covering lemma.
The formulation is adapted to our setting by using dynamical balls.

\begin{lemma}\label{l:5r} 
Let $r > 0$ and $\mathcal{B}(r) = \{B_{n}(x,r): x \in X, n \in \mathbb{N}\}$. For any family $\mathcal{F}\subset \mathcal{B}(r)$, there exists a (not
necessarily countable) subfamily $\mathcal{G} \subset \mathcal{F}$ consisting of disjoint balls such that
\[
\bigcup_{B \in \mathcal{F}}B \subset \bigcup_{B_n(x,r) \in \mathcal{G}} B_n(x,3r).
\]
\end{lemma}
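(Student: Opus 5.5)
The idea is to adapt the classical Vitali $3r$-covering argument to the dynamical balls $B_n(x,r)$. The one new ingredient is a monotonicity property of these balls. Since $I^{1,m}$ is a prefix family of $I^{1,n}$ for $m\le n$, and $d_n$ takes the maximum over $0\le t\le n$ and over all words, we have $d_m\le d_n$ for $m\le n$. Hence $B_n(x,r)\subset B_m(x,r)$ whenever $m\le n$, so balls with a smaller time index are ``larger''. The time index $n$ will therefore play the role of the inverse radius.

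\emph{Step 1 (key estimate).} I first show the following. If $B_n(x,r)\cap B_m(y,r)\neq\varnothing$ and $m\ge n$, then
\[
B_m(y,r)\subset B_n(x,3r).
\]
Take $z$ in the intersection and $u\in B_m(y,r)$. Since $m\ge n$, monotonicity gives $d_n(u,y)\le d_m(u,y)<r$ and $d_n(y,z)\le d_m(y,z)<r$. Also $d_n(z,x)<r$. The triangle inequality for the metric $d_n$ then gives $d_n(u,x)<3r$.

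\emph{Step 2 (greedy selection by time index).} Since the index $n$ ranges over $\mathbb N$, the family $\mathcal F$ splits into layers
\[
\mathcal F_n=\{B\in\mathcal F: B=B_n(x,r)\text{ for some }x\}.
\]
A ball may have several representations $(x,n)$. For definiteness I fix one representation for each member of $\mathcal F$ and work with indexed balls.

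I then select inductively on $n=1,2,\dots$. Let $\mathcal G_1$ be a maximal disjoint subfamily of $\mathcal F_1$, which exists by Zorn's lemma and need not be countable. Given $\mathcal G_1,\dots,\mathcal G_{n-1}$, consider the balls of $\mathcal F_n$ that meet no ball of $\mathcal G_1\cup\dots\cup\mathcal G_{n-1}$, and let $\mathcal G_n$ be a maximal disjoint subfamily of these. Set $\mathcal G=\bigcup_n\mathcal G_n$. By construction $\mathcal G$ consists of pairwise disjoint balls.

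\emph{Step 3 (covering).} Let $B=B_m(y,r)\in\mathcal F_m$. Either $B$ meets some ball of $\mathcal G_k$ with $k<m$, or, failing that, maximality of $\mathcal G_m$ forces $B$ to meet some ball of $\mathcal G_m$ (possibly $B$ itself). In either case $B$ meets some $B_k(x,r)\in\mathcal G$ with $k\le m$. Step 1 then gives $B\subset B_k(x,3r)$, which is exactly the required inclusion.

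The main subtlety is Step 1, specifically the ordering. A ball can only be absorbed by a ball of smaller or equal time index, so the greedy selection must process small $n$ first. This ordering is available precisely because the indices form the well-ordered set $\mathbb N$, so no boundedness-of-radii hypothesis is needed, unlike in the metric Vitali lemma. Zorn's lemma is the reason $\mathcal G$ may fail to be countable, matching the statement.
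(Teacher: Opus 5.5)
Your proof is correct and follows the same route as the paper, which gives no proof of its own but cites Lemma~1 of Ma--Wen, whose argument is this greedy selection of maximal disjoint subfamilies layer by layer in increasing time index; your check that $d_m\le d_n$ for $m\le n$ (every word of $I^{1,m}$ extends to one of $I^{1,n}$) is exactly what transfers it to NAIFSs. One small aside: because the maps are continuous, each $B_n(x,r)$ is a nonempty open subset of the separable space $X$, so $\mathcal G$ is in fact automatically countable, and Zorn's lemma does not actually produce an uncountable family.
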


\begin{theorem}\label{th1}
Let $(X,\boldsymbol{f})$ be an NAIFS on a compact metric space $(X,d)$, $\mu \in M(X)$ and $Z$ be a Borel subset of $X$. For $s \in \mathbb{R}$, the following properties hold.
\begin{itemize}
    \item[(1)]
    If~ $\underline{P}_{\mu}(\boldsymbol{f},\varphi, x)\le s$~ for ~all ~$x\in Z$,  then $P_{Z}(\boldsymbol{f},\varphi)\le s$.

    \item[(2)]
    If~$\underline{P}_{\mu}(\boldsymbol{f},\varphi, x)\ge s$~ for~ all ~$x\in Z$ ~and ~$\mu(Z)>0,$ then $P_{Z}(\boldsymbol{f},\varphi) \ge s$.
\end{itemize}
\end{theorem}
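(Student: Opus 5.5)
Both parts follow the mass-distribution scheme of Ma--Wen and Feng--Huang, adapted to the dynamical balls $B_n(x,r)$ and the weight $S_n\varphi$. Throughout, fix $\varepsilon>0$. Let $\varepsilon(r)=\sup\{|\varphi(x)-\varphi(y)|:d(x,y)<r\}$ as in the proof of Theorem~\ref{4.2t}. Recall that if $y\in B_n(x,r)$ then $|S_n\varphi(x)-S_n\varphi(y)|\le n\varepsilon(r)$, since the estimate holds for each word $w$ and we then take maxima.

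\textbf{Part (1).} For $r>0$ and $k\in\mathbb N$ set
\[
Z_k(r)=\Bigl\{x\in Z:\ \liminf_{n\to\infty}\frac{-\log\mu(B_n(x,r))+S_n\varphi(x)}{n}<s+\varepsilon \ \text{and this holds for all } r'\in(0,r]\Bigr\}.
\]
It is simpler to fix $r$ first. The quantity in the liminf is nondecreasing as $r$ decreases, so the hypothesis gives $\liminf<s+\varepsilon$ for every $r$ and every $x\in Z$. Fix $r$ and $N$. Each $x\in Z$ has some $n\ge N$ with $\mu(B_n(x,r))>\exp(-n(s+\varepsilon)+S_n\varphi(x))$. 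Apply Lemma~\ref{l:5r} to $\mathcal F=\{B_{n(x)}(x,r):x\in Z\}$ to get a disjoint subfamily $\mathcal G$ whose $3r$-enlargements cover $Z$. Disjointness and $\mu(X)=1$ force $\mathcal G$ to be countable, and
\[
\sum_{\mathcal G}\exp\bigl(-n_i(s+\varepsilon)+S_{n_i}\varphi(x_i)\bigr)\le\sum_{\mathcal G}\mu(B_{n_i}(x_i,r))\le1 .
\]
Hence $M(Z,\boldsymbol f,\varphi,s+\varepsilon,3r,N)\le1$ for all $N$. Therefore $m(Z,\boldsymbol f,\varphi,s+\varepsilon,3r)\le 1$, which gives $P_Z(\boldsymbol f,\varphi,3r)\le s+\varepsilon$. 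We use the enlarged radius $3r$ with the centre value $S_{n_i}\varphi(x_i)$; because the centres are kept, no correction is needed. Letting $r\to0$ and then $\varepsilon\to0$ gives (1).

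\textbf{Part (2).} Using $\mu(Z)>0$, I will extract a set of positive measure with uniform constants. For $x\in Z$ the limit in $r$ is at least $s$, so there is $r_x$ with $\liminf_n\frac{-\log\mu(B_n(x,r))+S_n\varphi(x)}{n}>s-\varepsilon$ for all $r\le r_x$. Choose $r_0$ such that $Z'=\{x\in Z: r_x\ge r_0\}$ has positive measure; these sets increase to $Z$ as $r_0\downarrow 0$, so this is possible. Then choose $N_0$ such that
\[
Z''=\bigl\{x\in Z':\ \mu(B_n(x,r_0))\le\exp(-n(s-\varepsilon)+S_n\varphi(x))\ \ \forall n\ge N_0\bigr\}
\]
has $\mu(Z'')>0$. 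Measurability comes from the preceding lemma, or we can pass to outer measure. Now take $r\le r_0/2$, $N\ge N_0$, and any cover $\{B_{n_i}(y_i,r)\}$ of $Z''$ with $n_i\ge N$; discard the balls that miss $Z''$. For each remaining ball choose $x_i\in Z''\cap B_{n_i}(y_i,r)$. Then $B_{n_i}(y_i,r)\subset B_{n_i}(x_i,2r)\subset B_{n_i}(x_i,r_0)$, and $S_{n_i}\varphi(x_i)\le S_{n_i}\varphi(y_i)+n_i\varepsilon(r)$. This gives
\[
\mu(Z'')\le\sum_i\mu(B_{n_i}(x_i,r_0))\le\sum_i\exp\bigl(-n_i(s-\varepsilon-\varepsilon(r))+S_{n_i}\varphi(y_i)\bigr).
\]
Hence $M(Z'',\boldsymbol f,\varphi,s-\varepsilon-\varepsilon(r),r,N)\ge\mu(Z'')>0$. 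It follows that $P_{Z''}(\boldsymbol f,\varphi,r)\ge s-\varepsilon-\varepsilon(r)$. Letting $r\to0$ and using monotonicity (Proposition~\ref{3.1p}(2)) gives $P_Z\ge P_{Z''}\ge s-\varepsilon$; then let $\varepsilon\to0$.

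The main obstacle is the bookkeeping of the potential term when centres move. In (1) this is avoided by keeping the original centres and enlarging the radius. In (2) it is controlled by $n\varepsilon(r)$, using that $S_n\varphi$ is a maximum over words, so the one-word estimate transfers. A secondary point is the countability of $\mathcal G$ in Lemma~\ref{l:5r}. Each ball in $\mathcal G$ has positive $\mu$-measure by the strict inequality in (1), so a disjoint family of such balls is countable.
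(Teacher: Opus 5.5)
Your proof is correct and follows the paper's argument. In (1) you use the $3r$-covering lemma (Lemma~\ref{l:5r}) with disjoint positive-measure balls to get $M(Z,\boldsymbol{f},\varphi,s+\varepsilon,3r,N)\le 1$. In (2) you extract a positive-measure set with uniform $r_0$ and $N_0$ (the paper's $Z_M$ and $Z_{M,N^*}$), then move the cover's centres into that set.

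The differences are minor. In (1) you note that, by monotonicity in $r$, the paper's decomposition $Z=\bigcup_m Z_m$ is unnecessary. In (2) you absorb the centre shift directly through the $n\,\varepsilon(r)$ estimate, whereas the paper bounds with $S_{n}\varphi(y_i,r/2)$ and then implicitly relies on Theorem~\ref{4.2t}.
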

\begin{proof}
We use the analogous method as that of \cite{MaWen2008}.
(1) Fix $\varepsilon> 0$, and put
\begin{align*}
 Z_m=\bigg\{ x \in Z  : &\liminf_{n\rightarrow \infty}\frac{-\log\mu (B_n(x,r))+S_{n}\varphi(x)}{n}< s+\varepsilon \\
 &~{\rm for~all}~r \in (0,1/m) \bigg\},
 \end{align*}
then we have $Z=\bigcup_{m \ge 1}Z_m$. Now fix $m \ge 1$ and $ 0<r<\frac{1}{3m} $. For each $x \in Z_m,$ there exists a strictly increasing sequence $\{n_{j}(x)\}_{j=1}^{\infty}$ such that
 \[
\log\mu  \big(B_{n_j(x)}(x,r) \big)-S_{n_{j}(x)}\varphi(x) \ge  -(s+\varepsilon) n_{j}(x)
\]
for all $j \ge 1$.
Given $N \geq 1$, set
\[
\mathcal{F}_{N}=\left\{B_{n_{j}(x)}(x,r): x \in  Z_m,  n_{j}(x)\ge N\right\}.
\]
Then $Z_{m} \subset \bigcup \limits_{B \in \mathcal{F}_{N}} B$.
By Lemma \ref{l:5r}, there exists a subfamily
$$\mathcal{G}_{N}=\{B_{n_{i}}(x_i,r)\}_{i\in I} \subset \mathcal{F}_{N}$$
consisting of disjoint balls such that
\[
Z_m \subset  \bigcup_{i \in I} B_{n_{i}}(x_i,3r),
\]
and
\[
\mu \big(B_{n_i}(x_i,r) \big)\ge \exp  \Big(-(s+\varepsilon) n_{i} +S_{n_i}\varphi(x_{i})\Big).
\]
The index set $I$ is at most countable since $\mu$ is a probability measure and $\mathcal{G}_{N}$ is a disjoint family of sets, each of
which has positive $\mu$-measure.
Therefore,
\begin{align*}
M(Z_m, \boldsymbol{f},\varphi, s+\varepsilon, 3r,N)&\le \sum_{i\in I}\exp  \bigg(-(s+\varepsilon) n_{i} +S_{n_i}\varphi(x_{i})\bigg)\\
&\le \sum_{i\in I}\mu  \big(B_{n_{i}}(x_i,r) \big)\le 1,
\end{align*}
where the disjointness of $\{B_{n_{i}}(x_i,r)\}_{i\in I}$ is used in the last inequality.
It follows that
 \[
 m(Z_m,\boldsymbol{f},\varphi, s+\varepsilon, 3r)=\lim\limits_{N\rightarrow \infty}  M(Z_m, \boldsymbol{f},\varphi, s+\varepsilon, 3r,N) \le 1,
 \]
 which in turn implies that $P_{Z_m}(\boldsymbol{f},\varphi,3r) \le s+\varepsilon$ for any $0<r<\frac{1}{3m}$.
 Taking $r \rightarrow 0$ yields
 \[
 P_{Z_m}(\boldsymbol{f},\varphi) \le s+\varepsilon~{\rm for~any}~m\ge 1.
 \]
By Proposition \ref{3.1p},
 \[
P_{Z}(\boldsymbol{f},\varphi)=P_{\cup_{m=1}^{\infty}Z_m}(\boldsymbol{f},\varphi) = \sup_{m\ge 1} \{P_{Z_m}(\boldsymbol{f},\varphi)\} \le s+\varepsilon.
 \]
Since $\varepsilon > 0$ is arbitrary, then we can get $P_{Z}(\boldsymbol{f},\varphi)\le s$.

(2) 
Fix $\varepsilon>0$ and for each $m\ge1$, set
\begin{align*}
Z_m=\Big\{ x \in Z :\ &\liminf_{n\rightarrow \infty}\frac{-\log \mu(B_{n}(x,r))
+S_{n}\varphi(x)}{n}> s-\varepsilon\\
&\qquad \text{for all } r\in(0,1/m]\Big\}.
\end{align*}
Since $\frac{-\log \mu(B_{n}(x,r))
+S_{n}\varphi(x)}{n}$ increases when $r$ decreases, it follows that
\[
Z_m=\Big\{ x \in Z :\liminf_{n\rightarrow \infty}\frac{-\log \mu(B_{n}(x,r))
+S_{n}\varphi(x)}{n}> s-\varepsilon,~r= 1/m\Big\}.
\]
Then $Z_m \subset Z_{m+1}$ and $\bigcup_{m=1}^{\infty}Z_m=Z$. So by the continuity of the measure, we have
\[
\lim_{m \to \infty}\mu(Z_m)=\mu(Z).
\]
Take $M \ge 1$ with $\mu(Z_M) > \frac12 \mu(Z)$. For every $N \ge 1$, put
\begin{align*}
Z_{M,N}
&:= \left\{ x \in Z_M : \frac{-\log \mu(B_{n}(x,r))+S_{n}\varphi(x)}{n} > s - \varepsilon \text{ for all } n \ge N \text{ and } r \in \left(0, \frac{1}{M}\right] \right\} \\
&= \left\{ x \in Z_M : \frac{-\log \mu(B_{n}(x,r))
+S_{n}\varphi(x)}{n} > s - \varepsilon \text{ for all } n \ge N \text{ and } r = \frac{1}{M} \right\}.
\end{align*}
Thus $Z_{M,N} \subset Z_{M,N+1}$ and $\bigcup_{N=1}^{\infty}Z_{M,N}=Z_{M}$.
Again, we can find $N^*\ge1$ such that
$\mu(Z_{M,N^*})>\frac12\mu(Z_{M})>0$. For every $x\in Z_{M,N^*}$, $0<r<\frac{1}{M}$, and $n\ge N^*$,
\begin{equation}\label{eq:mu-upper}
\mu(B_n(x,r))
\le \exp\Big(-(s-\varepsilon)n+S_n\varphi(x)\Big).
\end{equation}
Now fix $0<r<\frac{1}{M}$ and $N\ge N^*$, and let
\[
\mathcal{F}= \left\{B_{n_{i}}(y_i,r/2) : n_{i} \ge N \right\}
\]
be an arbitrary cover of $Z_{M,N^*}$ with $Z_{M,N^*}\cap B_{n_{i}}(y_i,r/2)\neq\emptyset$ for all $i$.
For each $i\ge 1$, choose $x_i\in Z_{M,N^*}\cap B_{n_{i}}(y_i,r/2)$.
Then by the triangle inequality,
\[
B_{n_i}(y_i,r/2)\subset B_{n_i}(x_i,r).
\]
In combination with \eqref{eq:mu-upper}, we can get
\begin{align*}
\sum_{i \ge 1}\exp\Big(-(s-\varepsilon)n_i
+S_{n_i}\varphi(y_i,r/2)\Big) &\ge
\sum_{i \ge 1}\exp\Big(-(s-\varepsilon)n_i
+S_{n_i}\varphi(x_i)\Big)\\
&\ge
\sum_{i \ge 1} \mu\bigl(B_{n_i}(x_i,r)\bigr)
\ \ge\ \mu(Z_{M,N^*})>0.
\end{align*}
Thus,
\[
\mathcal{M}(Z_{M,N^*},\boldsymbol{f},\varphi,s-\varepsilon,r/2,N)\ge \mu(Z_{M,N^*})>0
\quad\text{for all } N\ge N^*.
\]
It follows that
\[
P_Z(\boldsymbol{f},\varphi)\ge P_{Z_{M,N^*}}(\boldsymbol{f},\varphi)
\ge s-\varepsilon.
\]
Since $\varepsilon>0$ is arbitrary, we obtain $P_Z(\boldsymbol{f},\varphi)\ge s$.
\end{proof}

\begin{corollary}
Let $(X,\boldsymbol{f})$ be an NAIFS on a compact metric space $(X,d)$, $\mu \in M(X)$ and $Z$ be a Borel subset of $X$. For $s \in \mathbb{R}$, the following properties hold.
\begin{itemize}
\item[(1)]
If~ $\underline{h}_{\mu}(\boldsymbol{f}, x)\le s$~ for ~all ~$x\in Z$,  then $h_{Z}(\boldsymbol{f})\le s$.

\item[(2)]
If~$\underline{h}_{\mu}(\boldsymbol{f},x)\ge s$~ for~ all ~$x\in Z$ ~and ~$\mu(Z)>0,$ then $h_{Z}(\boldsymbol{f}) \ge s$.
\end{itemize}
\end{corollary}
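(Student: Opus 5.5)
This corollary is the special case $\varphi\equiv 0$ of Theorem~\ref{th1}, so the plan is to apply that theorem directly and check that every quantity specializes correctly.

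First, with $\varphi\equiv0$ every Birkhoff sum vanishes: $S_w\varphi\equiv0$ for each word $w$, hence $S_n\varphi(x)=\max_{w\in I^{1,n}}S_w\varphi(x)=0$ and $S_n\varphi(x,\delta)=0$. Substituting into the definition of $\underline{P}_{\mu}(\boldsymbol f,\varphi,x)$ gives
\[
\underline{P}_{\mu}(\boldsymbol f,0,x)=\lim_{r\to0}\liminf_{n\to\infty}-\frac1n\log\mu\bigl(B_n(x,r)\bigr)=\underline h_\mu(\boldsymbol f,x).
\]
Likewise, the function $M(Z,\boldsymbol f,0,\alpha,\delta,N)$ coincides with the covering sums $\sum_i e^{-\alpha n_i}$. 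Consequently the critical values satisfy $P_Z(\boldsymbol f,0,\delta)=h_Z(\boldsymbol f,\delta)$, and after letting $\delta\to0$, Definition~\ref{4.1t} gives $P_Z(\boldsymbol f,0)=h_Z(\boldsymbol f)$.

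For part (1), the hypothesis $\underline h_\mu(\boldsymbol f,x)\le s$ on $Z$ becomes $\underline{P}_{\mu}(\boldsymbol f,0,x)\le s$ on $Z$. Theorem~\ref{th1}(1) then gives $P_Z(\boldsymbol f,0)\le s$, which is $h_Z(\boldsymbol f)\le s$.

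For part (2), the hypothesis together with $\mu(Z)>0$ gives, by Theorem~\ref{th1}(2), that $P_Z(\boldsymbol f,0)\ge s$, which is $h_Z(\boldsymbol f)\ge s$.

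There is no genuine obstacle here. The only point needing care is that the constant function $0$ is an admissible potential in $C(X,\mathbb R)$, so Theorem~\ref{th1} applies to it. The measurability of $x\mapsto\underline h_\mu(\boldsymbol f,x)$ is already covered by the preceding measurability lemma, although the corollary does not actually require it.
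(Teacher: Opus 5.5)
Your proposal is correct and matches the paper, which gives no separate proof because the corollary is exactly Theorem~\ref{th1} with $\varphi\equiv 0$: then $S_n\varphi\equiv 0$ turns $\underline{P}_{\mu}(\boldsymbol{f},\varphi,x)$ into $\underline{h}_{\mu}(\boldsymbol{f},x)$, and Definition~\ref{4.1t} defines $h_Z(\boldsymbol{f})$ to be $P_Z(\boldsymbol{f},0)$. One cosmetic point: the paper never introduces $h_Z(\boldsymbol{f},\delta)$, so $P_Z(\boldsymbol{f},0)=h_Z(\boldsymbol{f})$ holds by definition, and your comparison of covering sums is unnecessary.
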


The following lemma is an analogue of the approximation result of Feng and Huang, which can be viewed as a version
of dynamical classical Frostman’s lemma. We give its proof for NAIFSs, following the argument of \cite[Theorem~4.3]{FengHuang2012}.

\begin{proposition}\label{p:measure}
Let $K \subset X$ be a non-empty compact subset. Let $\alpha \in \mathbb{R}, N \in \mathbb{N} $ and $\varepsilon >0$.
Suppose that $c:=W(\boldsymbol{f},\varphi, K, \alpha, \varepsilon,N) >0$.
Then there is $\mu \in M(X)$ such that $\mu(K)=1$ and
\[
\mu(B_{n}(x,\varepsilon)) \leq \frac{1}{c} \exp \left(-\alpha n +S_{n} \varphi(x)\right), \forall x \in X,~ n \geq N.
\]
\end{proposition}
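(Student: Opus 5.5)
We follow the Hahn--Banach argument of Feng and Huang \cite[Theorem~4.3]{FengHuang2012}, with the weighted functional $W$ as a sublinear functional on $C(X,\mathbb{R})$. For $g\in C(X,\mathbb{R})$ we define
\[
p(g):=\frac{1}{c}\,W\bigl(\boldsymbol{f},\varphi,\chi_K\cdot g,\alpha,\varepsilon,N\bigr),
\]
where $W(\cdot)$ is the weighted sum with an arbitrary bounded function $h=\chi_K g$. Here the infimum runs over families $\{(B_{n_i}(x_i,\varepsilon),c_i)\}$ with $n_i\ge N$ and $\sum_i c_i\chi_{B_i}\ge h$, and we set $W=0$ when $h\le 0$ (the empty family is admissible). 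From the definition one checks the following. First, $p(g_1+g_2)\le p(g_1)+p(g_2)$: concatenating two admissible families gives an admissible family for the sum. Second, $p(tg)=tp(g)$ for $t\ge0$. Third, $p(1)=1$, since $h=\chi_K$ gives $W=c$. Fourth, $0\le p(g)\le \frac1c\|g\|_\infty W(\boldsymbol f,\varphi,K,\alpha,\varepsilon,N)=\|g\|_\infty$. Finally, $p(g)=0$ whenever $g|_K\le 0$.

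By the Hahn--Banach theorem, we extend the linear functional $t\mapsto t\,p(1)=t$ on the constants to a linear functional $L$ on $C(X,\mathbb{R})$ with $L(1)=1$ and $-p(-g)\le L(g)\le p(g)$. If $g\ge 0$, then $-g\le 0$, so $p(-g)=0$ and $L(g)\ge 0$. Thus $L$ is positive, and the Riesz representation theorem gives $\mu\in M(X)$ with $L(g)=\int g\,d\mu$. To get $\mu(K)=1$, take compact $E\subset X\setminus K$. Urysohn gives $g\in C(X)$, $0\le g\le1$, $g=1$ on $E$ and $g=0$ on $K$. Then $\mu(E)\le L(g)\le p(g)=0$. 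By inner regularity $\mu(X\setminus K)=0$.

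For the measure bound, fix $x\in X$ and $n\ge N$. Take any compact $E\subset B_n(x,\varepsilon)$ and a Urysohn function $g$ with $0\le g\le 1$, $g=1$ on $E$ and support inside the open set $B_n(x,\varepsilon)$. The open dynamical ball is open, since $d_n$ is a finite maximum of continuous functions. Then $\chi_K g\le \chi_{B_n(x,\varepsilon)}$. The single-ball family $\{(B_n(x,\varepsilon),1)\}$ is admissible because $n\ge N$, so
\[
\mu(E)\le L(g)\le p(g)\le \frac1c\exp\bigl(-\alpha n+S_n\varphi(x)\bigr).
\]
Inner regularity over compact $E\subset B_n(x,\varepsilon)$ then gives the claimed inequality.

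The main obstacle is the careful bookkeeping for sublinearity and for $p(g)=0$ when $g\le0$ on $K$. This requires extending the definition of $W$ to general (possibly non-positive) bounded $h$, with the convention that the empty family is allowed. Unlike in the box-counting setting, compactness of $K$ plays no deeper role here beyond ensuring that $K$ is closed, so that Urysohn functions vanishing on $K$ exist. Everything else is formal, exactly as in Feng--Huang. The only NAIFS-specific input is that $B_n(x,\varepsilon)$ is open and that the weight attached to a ball depends only on its center and its length $n$.
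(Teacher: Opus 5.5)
Your proposal is correct and follows essentially the same route as the paper. Both use the sublinear functional $p(g)=\frac{1}{c}W(\boldsymbol{f},\varphi,\chi_K\cdot g,\alpha,\varepsilon,N)$, extend from the constants by Hahn--Banach, apply Riesz representation, and then use Urysohn functions with inner regularity to obtain $\mu(K)=1$ and the ball estimate, as in Feng--Huang. The one point you leave implicit is $c<\infty$, which is needed to define $p$; it follows from covering the compact space $X$ by finitely many open balls $B_N(x_j,\varepsilon)$, and the paper also treats it as clear.
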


\begin{proof}
It is clear that $c < \infty.$ Define a functional $p$ on the space $C(X,\mathbb{R})$ of continuous real-valued functions on $X$ by
\[
p(g)= \frac{1}{c} W(\boldsymbol{f},\varphi, \chi_{K}\cdot g, \alpha, \varepsilon,N).
\]
Let $\mathbf{1}$ denote the constant function $\mathbf{1}(x)\equiv 1$. It is easy to verify
\begin{itemize}
    \item[(1)]
    $p(h+g) \leq p(h)+p(g)$~ for any $h, g \in C(X,\mathbb{R})$.
    \item[(2)]
    $p(t g)=tp(g)$ for any $t\geq 0$ and $g \in C(X,\mathbb{R})$.
    \item[(3)]
    $p(\mathbf{1})=1, 0\leq p(g) \leq \|g\|_{\infty}$  for any $g \in C(X,\mathbb{R})$, and $p(g)=0$  for $g \in C(X,\mathbb{R})$ with $g\leq 0$.
\end{itemize}
By the Hahn-Banach theorem, we can extend the linear functional $t \mapsto t p(\mathbf{1}), t\in \mathbb{R},$  from the subspace of the constant functions to a linear functional $L: C(X,\mathbb{R}) \rightarrow \mathbb{R}$ satisfying
\[
L(\mathbf{1})=p(\mathbf{1})=1,\  \  -p(-h) \leq L(h) \leq p(h)\  \text{for any}\  h \in C(X,\mathbb{R}).
\]
If $h \in C(X,\mathbb{R})$ with $h \geq 0$, then $p(-h)=0$ and so $L(h) \geq 0$.
Hence combining the fact that $L(\mathbf{1})=1$, we can use the Riesz representation theorem to find a Borel probability measure $\mu$ on $X$ such that $L(h)=\int h d \mu$ for $h \in C(X,\mathbb{R})$.

Next, we show that $\mu(K)=1$. To see this, for any compact set $E  \subseteq X\setminus K$, by the Urysohn lemma there is $h \in C(X,\mathbb{R})$ such that $0 \leq h\leq 1, h(x)=1$ for $x \in E$ and $h(x)=0$ for $x \in K$.
Then $h \cdot \chi_{K}\equiv 0$ and thus $p(h)=0$.
 Hence $ \mu(E) \leq L(h) \leq p(h)=0$. Since $\mu$ is regular, we have $\mu (X\setminus K)=0$. This means that $\mu(K)=1.$

We now show that
\[
\mu \bigl(B_{n}(x, \varepsilon)\bigr)
\leq
\frac{1}{c}\,
\exp \left(-\alpha n + S_{n} \varphi(x)\right),
\quad
\text{for all } x \in X \text{ and } n \geq N .
\]

To see this, for any compact set $E \subset B_{n}(x,\varepsilon)$, by Urysohn lemma, there exists $h \in C(X,\mathbb{R})$
such that $0 \leq h\leq 1, h(y)=1$ for $y \in E$ and $h(y)=0$ for $y \in X\setminus B_{n}(x,\varepsilon)$.
Then $\mu(E) \leq L(h) \leq p(h)$.
Since $h \cdot \chi_{K} \leq \chi_{B_{n}(x,\varepsilon)}$ and $n \geq N$, we have \[
W(\boldsymbol{f},\varphi, h \cdot \chi _{K}, \alpha, \varepsilon,N) \leq \exp \left(-\alpha n +S_{n} \varphi(x)\right)
\]
and thus $p(h) \leq \frac{1}{c} \exp \left(-\alpha n +S_{n} \varphi(x)\right)$.
Therefore,
$$\mu(E) \leq \frac{1}{c} \exp \left(-\alpha n +S_{n} \varphi(x)\right).$$
Using the regularity of $\mu$, we obtain
\(\mu (B_{n}(x, \varepsilon))\leq \frac{1}{c} \exp \left(-\alpha n +S_{n} \varphi(x)\right).
\)
\end{proof}

\begin{theorem}\label{th2}
Let $(X,\boldsymbol{f})$ be an NAIFS on a compact metric space $(X,d)$. If $Z \subset X$ is non-empty compact set, then
\[
P_{Z}(\boldsymbol{f},\varphi)= \sup \left\{\underline{P}_{\mu}(\boldsymbol{f},\varphi): \mu \in M(X), \mu(Z)=1\right\}.
\]
\end{theorem}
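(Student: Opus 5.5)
We prove the two inequalities separately, following the scheme of Feng--Huang and Tang et al.

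\emph{Upper bound ($\ge$).} Fix $\mu\in M(X)$ with $\mu(Z)=1$ and show $\underline{P}_\mu(\boldsymbol f,\varphi)\le P_Z(\boldsymbol f,\varphi)$. If $P_Z(\boldsymbol f,\varphi)=\infty$ there is nothing to prove. Otherwise, for $\varepsilon>0$ set
\[
A_\varepsilon=\{x\in Z:\underline{P}_\mu(\boldsymbol f,\varphi,x)\ge P_Z(\boldsymbol f,\varphi)+\varepsilon\}.
\]
This set is Borel by the measurability lemma. If $\mu(A_\varepsilon)>0$, then Theorem~\ref{th1}(2) together with Proposition~\ref{3.1p}(2) gives
\[
P_Z(\boldsymbol f,\varphi)\ge P_{A_\varepsilon}(\boldsymbol f,\varphi)\ge P_Z(\boldsymbol f,\varphi)+\varepsilon,
\]
which is a contradiction. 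Hence $\underline{P}_\mu(\boldsymbol f,\varphi,x)< P_Z(\boldsymbol f,\varphi)+\varepsilon$ for $\mu$-a.e.\ $x$. Integrating and letting $\varepsilon\to0$ gives the claim. The pointwise pressure is bounded below by $-\|\varphi\|_\infty$, so the integral makes sense.

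\emph{Lower bound ($\le$).} This is the main step. Assume $P_Z(\boldsymbol f,\varphi)>-\infty$; the pointwise pressure is always at least $-\|\varphi\|_\infty$, so only values above that matter. Take any finite $s<P_Z(\boldsymbol f,\varphi)$ and choose $\varepsilon>0$ small with $P_Z(\boldsymbol f,\varphi,\varepsilon)>s$. By Proposition~\ref{p:W=B}, $P^W_Z(\boldsymbol f,\varphi,\varepsilon')$ also exceeds $s$ for a suitable small $\varepsilon'$. Concretely, the left inequality $M(Z,\alpha+\eta,6\delta,N)\le W(Z,\alpha,\delta,N)$ lets us pass from $M$ at radius $6\delta$ to $W$ at radius $\delta$ with a loss $\eta$ in the exponent. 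It follows that $W(\boldsymbol f,\varphi,Z,s,\delta)=\infty$ for some small $\delta$, so there is $N$ with
\[
c:=W(\boldsymbol f,\varphi,Z,s,\delta,N)>0.
\]
Since $Z$ is compact, Proposition~\ref{p:measure} yields $\mu\in M(X)$ with $\mu(Z)=1$ and
\[
\mu(B_n(x,\delta))\le c^{-1}\exp\bigl(-sn+S_n\varphi(x)\bigr)\quad\text{for all } x\in X,\ n\ge N.
\]
Taking logarithms gives $-\log\mu(B_n(x,\delta))+S_n\varphi(x)\ge sn+\log c$. Here is the subtle point: the definition of $\underline P_\mu(\boldsymbol f,\varphi,x)$ uses the combination $-\log\mu+S_n\varphi$ with a plus sign. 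To handle this, I would check carefully against the conventions used in Theorem~\ref{th1}, whose proof pairs $\log\mu(B_n)-S_n\varphi\ge -(s+\varepsilon)n$ with covering sums $\exp(-sn+S_n\varphi)$. That pairing means the intended quantity is $-\log\mu(B_n(x,r))+S_n\varphi(x)$, consistent with the Frostman bound above.

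With that convention, for each $r<\delta$ the bound gives
\[
\liminf_{n\to\infty}\frac{-\log\mu(B_n(x,r))+S_n\varphi(x)}{n}\ge s,
\]
using $B_n(x,r)\subset B_n(x,\delta)$. Hence $\underline P_\mu(\boldsymbol f,\varphi,x)\ge s$ for every $x$, and so $\underline P_\mu(\boldsymbol f,\varphi)\ge s$. Letting $s\uparrow P_Z(\boldsymbol f,\varphi)$ finishes the proof.

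The expected obstacle is this sign and normalization bookkeeping. We must make sure that the quantity controlled by Proposition~\ref{p:measure} is exactly the one appearing in $\underline P_\mu$, and that the radius loss from $6\delta$ to $\delta$ in Proposition~\ref{p:W=B} is absorbed by the limits in $\delta$.
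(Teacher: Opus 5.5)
Your proposal is correct and follows the paper's proof essentially step for step. The inequality $P_Z(\boldsymbol f,\varphi)\ge\underline P_\mu(\boldsymbol f,\varphi)$ comes from Theorem~\ref{th1}(2) applied to a positive-measure level set inside $Z$, and the reverse inequality comes from Proposition~\ref{p:W=B} combined with the Frostman-type Proposition~\ref{p:measure}. Your level set is defined relative to $P_Z(\boldsymbol f,\varphi)$ rather than relative to $\underline P_\mu(\boldsymbol f,\varphi)$, which has the small bonus of also covering the case $\underline P_\mu(\boldsymbol f,\varphi)=+\infty$. The sign convention you worried about is consistent: the bound $\mu(B_n(x,\delta))\le c^{-1}e^{-sn+S_n\varphi(x)}$ gives exactly $-\log\mu(B_n(x,\delta))+S_n\varphi(x)\ge sn+\log c$, as in the paper.
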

\begin{proof}
We first prove that
\[
P_{Z}(\boldsymbol{f},\varphi)
\ge
\underline{P}_{\mu}(\boldsymbol{f},\varphi)
\quad
\text{for any } \mu \in M(X) \text{ with } \mu(Z)=1 .
\]
Let $\mu$ be a given such measure.
Since $\underline{P}_{\mu}(\boldsymbol{f},\varphi)= \int \underline{P}_{\mu}(\boldsymbol{f},\varphi, x) d \mu(x)$, it follows that the set
\[
Z_{\delta}=\{x \in Z: \underline{P}_{\mu}(\boldsymbol{f},\varphi, x)  \geq \underline{P}_{\mu}(\boldsymbol{f},\varphi)-\delta\}
\]
has positive $\mu$-measure for all $\delta>0$.
Thus, by (2) of Theorem \ref{th1}, we obtain
\[
P_{Z_{\delta}}(\boldsymbol{f},\varphi) \geq \underline{P}_{\mu}(\boldsymbol{f},\varphi)-\delta.
\]
 Since $Z_{\delta} \subset Z$ for all $\delta>0$, we have $P_{Z}(\boldsymbol{f},\varphi) \geq \underline{P}_{\mu}(\boldsymbol{f},\varphi).$
 Hence,
\[
P_{Z}(\boldsymbol{f},\varphi)\geq \sup \bigg\{\underline{P}_{\mu}(\boldsymbol{f},\varphi): \mu \in M(X), \mu(Z)=1\bigg\}.
\]

We now prove the reverse inequality.
Assume that $P_{Z}(\boldsymbol{f},\varphi) > -\infty$, since otherwise there is
nothing to prove.
By Proposition \ref{p:W=B}, we have $P^{W}_{Z}(\boldsymbol{f},\varphi)=P_{Z}(\boldsymbol{f},\varphi).$
Fix any $\alpha<P^{W}_{Z}(\boldsymbol{f},\varphi)$, then there exist $\varepsilon>0$ and $N\in \mathbb{N}$ such that $c:=W(\boldsymbol{f},\varphi, Z, \alpha, \varepsilon,N) >0$.
By Proposition \ref{p:measure}, there exists $\mu \in M(X)$ with $\mu(Z)=1$ satisfying 
\[
\mu\bigl(B_{n}(x,\varepsilon)\bigr)
\le
\frac{1}{c}\,
\exp\left(-\alpha n + S_{n}\varphi(x)\right)
\quad
\text{for any } x\in X \text{ and } n\ge N .
\]
Clearly,
$$\underline{P}_{\mu}(\boldsymbol{f},\varphi, x) \geq \liminf\limits_{n \rightarrow \infty} \frac{-\log \mu (B_{n}(x,\varepsilon))+S_{n} \varphi(x)}{n}\geq \alpha$$
 for each $x \in X$ and hence $\underline{P}_{\mu}(\boldsymbol{f},\varphi)= \int \underline{P}_{\mu}(\boldsymbol{f},\varphi, x) d \mu(x)  \geq \alpha$. Since $\alpha<P_{Z}(\boldsymbol{f},\varphi)$ is arbitrary, we conclude that \[ P_{Z}(\boldsymbol{f},\varphi) \le \sup\left\{ \underline{P}_{\mu}(\boldsymbol{f},\varphi) : \mu\in M(X),\ \mu(Z)=1 \right\}. \] This completes the proof.
\end{proof}

\section*{Acknowledgements}
The authors are grateful to Dr.~Qian~Xiao for the useful comments and discussions.

\section*{Declarations}

\section*{Funding}
This work was supported by the Scientific Research Foundation of Chongqing Technology and Business University (Grant Nos. 2256003 and 2156019) and the Science and Technology Research Program of Chongqing Municipal Education Commission (Grant No. KJQN202500802).

\section*{Competing Interests}

The author declares that there are no conflicts of interest regarding this paper.

\bibliography{references}

\end{document}